\documentclass[11pt]{article}
\usepackage{latexsym, amssymb, amsmath, amsfonts, amsthm,graphics,
  combinedgraphics,euscript, enumitem,parskip}
\usepackage{caption}
\setlength{\textheight}{8.0in}
\addtolength{\topmargin}{-.6in}
\addtolength{\oddsidemargin}{-.3in}
\addtolength{\evensidemargin}{-.3in}
\addtolength{\textwidth}{.5in}
\setlength\parindent{0pt}


\begin{document}

\theoremstyle{plain}

\newtheorem{prop}{Proposition}
\newtheorem{theorem}{Theorem}
\newtheorem{corollary}{Corollary}
\newtheorem{lemma}{Lemma}

\theoremstyle{definition}
\newtheorem{definition}{Definition}
\newtheorem*{remark}{Remark}
\newtheorem*{remarks}{Remarks}
\newtheorem{example}{Example}
\newtheorem{examples}{Examples}

\newcommand{\rr}{\mathbb R}

\newcommand{\nn}{\mathbb N}
\newcommand{\zz}{\mathbb Z}
\newcommand{\ts}{\thinspace}
\newcommand{\sse}{\subseteq}

\newcommand{\al}{\alpha}
\newcommand{\be}{\beta}
\newcommand{\ga}{\gamma}
\newcommand{\la}{\lambda}

\newcommand{\vv}{\mathit{v}}
\newcommand{\cc}{\mathcal{C}}
\newcommand{\ww}{\mathcal{W}}
\newcommand{\mcc}[1]{{\mathcal #1}}

\newcommand{\sen}{{\mathcal S}}
\newcommand{\rep}{{\mathcal R}}
\newcommand{\prs}{{\mathcal P}}
\newcommand{\vp}{{\mathcal V}}

\newcommand{\aji}{A_{j,i}}

\renewcommand\theenumi{(\alph{enumi})}
\renewcommand\labelenumi{\theenumi}

\newcommand{\vsp}{\vspace{.2cm}}

\title{Power Index Rankings in  Bicameral Legislatures and the US Legislative System}

\author{Victoria Powers \footnote{
Department of Mathematics,
Emory University,
Atlanta, GA 30322. Email:\, vpowers@emory.edu}}
\maketitle

\begin{abstract}  In this paper we study rankings induced by power indices of players in simple game models of bicameral legislatures.  For a bicameral legislature where bills are passed with a simple majority vote in each house we give a condition involving
the size of each chamber which guarantees  that a member of the smaller house has more power than a member of the larger house, regardless of the power index used.  The only case for which this does not apply is  when the smaller house has an odd number of players, the larger house has an even number of players, and the larger house is less than
twice the size of the smaller house.   We explore what can happen in this exceptional case.  These results generalize to multi-cameral legislatures.  
Using a standard model of the US legislative system as a simple game, we use our results to study power index rankings of the four types
of players -- the president, the vice president, senators, and representatives. We prove
that a senator is always ranked above a representative and ranked the same as  or above the vice president.  We also show that
the president is always ranked above 
the other players.  We show that for most power index rankings, 
including the Banzhaf and Shapley-Shubik power indices, the vice president is ranked above a representative, however,
there exist power indices ranking a representative above the vice president.  
\end{abstract}

\section{Introduction}   

A power index assigns a numerical measure of power to each player in a simple game and thus yields a ranking of the players. In this paper we look at power index rankings of the
players in simple game models of bicameral legislatures and similar legislative systems.   For a bicameral legislature where bills are passed with a simple majority vote in each house we give a condition involving
the size of each chamber which guarantees  that a member of the smaller house has more power than a member of the larger house, regardless of the power index used.  The only case for which this does not apply is  when the smaller house has an odd number of players, the larger house has an even number of players, and the larger house is less than
twice the size of the smaller house.   We explore what can happen in this exceptional case.   These results generalize
easily to the multi-cameral situation.

We apply our results and techniques to study power index rankings in the standard simple game
model of the US legislative system, which has four types of players:  senators, representatives, the president, and the vice president.  In the case of the US legislative system, we show that regardless of the power index chosen,  the president always has more power than the other players,
a senator always has more power than a representative, and a senator always has at least as much power as the vice president.  For ``reasonable" power indices,
including the Banzhaf
and Shapley-Shubik indices, the ranking  from most power to least power is:  president, senator, vice president, representative.   These results apply to more
general systems which are similar to the US legislative system, with a bicameral legislature plus a president  or a bicameral legislature plus a president and vice president.

Power indices for this simple game model of the US legislative system have been studied previously, in the context of calculating power with a specific index.   For example, in the book
by Taylor and Pacelli \cite{tp}, the authors discuss calculating Banzhaf and Shapley-Shubik power in a model of the US legislative system (without the vice president).  
Brams, Affuso, and Kilgour \cite{bak} study Banzhaf and Johnston power in the same model (no vice president).  In both of these cases, the authors are interested
in the percent of power held by the players rather than  power index rankings.

\section*{Acknowledgements}  Thanks to Bruce Reznick for several helpful conversations and to several anonymous referees 
who provided extensive comments and suggestions
that  greatly improved the paper.  

\section{Preliminaries}

\subsection{Simple Games and Power Indices}

A  (monotonic) {\bf simple game} is a pair $(N, \ww)$ where $N = \{ 1, 2, \dots, n \}$ is the set of players and $\ww$ is a set
of subsets of $N$, called the {\bf winning coalitions}, such that 
\begin{itemize}
\item $\emptyset \not \in \ww$.

\item $N \in \ww$.

\item If $S \in \ww$ and $S \sse T$, then $T \in \ww$.

\end{itemize}

The  {\bf minimal winning coalitions} are the
winning coalitions for which no proper subset is winning.  The set of winning coalitions
is determined by the minimal ones since a subset $S \in \ww$ if and only if $S$ contains a minimal winning coalition.

A simple game is a model of a yes-no voting system in which the players are deciding on a single alternative such as a motion, bill, or amendment. 
The winning coalitions are precisely the sets of players that can force a bill to pass if they all support it.

Given a simple game $(N,\ww)$ and $S \in \ww$ containing player $i$, we say $i$ is {\bf critical} in
$S$ if $S$ is winning and $S \setminus \{ i \}$ is losing.  For $i \in N$ and $1 \leq k \leq n$, 
let $$\cc_i = \{ S \in \ww \mid i \text{ is critical in } S  \}, \quad  \cc_i(k) = \{ S \in \cc_i \mid |S| = k \}.$$
Let $c_i(k) = |\cc_i(k)|$, the number of coalitions of size $k$ in which $i$ is critical.  The numbers $c_i(k)$ are
called {\bf critical numbers}. \\

\begin{definition} Define a binary relation on $N$ by $i \succeq j$ iff $c_i(k) \geq c_j(k)$ for all $k$ such that $1 \leq k \leq n$, and write  $i \succ j$ if  $c_i(k) >  c_j(k)$  for all $k$
such that $c_i(k)$ and $c_j(k)$ are not both zero.  Following 
\cite{CF}, we call $\succeq$  the {\bf weak desirability} relation.
\end{definition}

\subsection{Power Indices}

Power indices are a way to measure the relative power of the players in a simple game.  The most famous of these are
the Shapley-Shubik index \cite{SS} and the Banzhaf index \cite{banz}.  Semivalues were introduced in 1979 by Weber \cite{web}
as a generalization of the notion of a power index to general cooperative games.   Dubey et al.~\cite{dub} show that semivalues can be characterized in terms of a 
{\bf weighting vector}
$(\la_1, \dots , \la_n)$ such that $\la_k \geq 0$ for all $k$ and $\sum_{k=1}^n \la_k \binom{n - 1}{k - 1} = 1$.   

Given a power index $\Phi$ with weighting vector $(\la_1, \dots , \la_n)$, the $\Phi$-power of a player $i$ is defined by
$$\Phi(i) := \sum_{k=1}^n \la_k \thinspace c_i(k).$$    Thus the $\la_i$'s give a weighting of 
a player's contribution to coalitions of size $k$.

The Shapley-Shubik power index is defined by weighting coefficients $\la_k = 1/\left(n \binom{n-1}{k-1}\right)$ and the Banzhaf power index 
is defined by weighting coefficients $\la_k = 1/2^{n-1}$.

Any power index $\Phi$ defines a ranking on the set of players in a simple game and we write  $i \geq_\Phi j$ to denote that
$\Phi(i) \geq \Phi(j)$ and $i >_\Phi j$ if $\Phi(i) > \Phi(j)$.   Clearly, different power indices can lead to different rankings for the same game.   In \cite{SK}, Saari and Sieberg
look at rankings of players coming from power indices in cooperative games.  They show that different indices can generate radically
different rankings and that there can be many different rankings even for games with a relatively small number of players.  This is in contrast to our results
which show that there are only two possible rankings for a simple game model of the US legislative system.  

The following results follow easily from the definitions, see also \cite[Theorem 3.4]{CF}.  \vsp

\begin{prop}  \label{compare} Let $i$ and $j$ be two players in a simple game.

\begin{enumerate}

\item If $i \succeq j$, then for any power index $\Phi$, $i \geq_\Phi j$. 

\item Suppose there exist $k, m$ such that $c_i(k) > c_j(k)$ and $c_i(m) < c_j(m)$.  Then there exist power indices $\Phi$ and $\Psi$ such that
$i >_\Phi j$ and $j <_\Psi i$.

\end{enumerate}

\end{prop}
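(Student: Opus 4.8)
The plan for part (a) is to read it straight off the definitions. A power index has a weighting vector with $\la_k \geq 0$ for every $k$, and $i \succeq j$ means precisely that $c_i(k) \geq c_j(k)$ for all $k$ with $1 \leq k \leq n$. So I would compute
$$\Phi(i) - \Phi(j) = \sum_{k=1}^n \la_k\bigl(c_i(k) - c_j(k)\bigr)$$
and observe that each summand is a product of two nonnegative numbers, hence nonnegative; summing gives $\Phi(i) \geq \Phi(j)$, i.e. $i \geq_\Phi j$. There is no genuine obstacle here — it is a one-line monotonicity argument, valid for every power index simultaneously.

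For part (b), the idea is that the hypothesis hands us one coalition size $k$ at which $i$ strictly dominates $j$ and another size $m$ at which $j$ strictly dominates $i$, so I can produce the two opposite rankings by concentrating all the weight of a semivalue at a single coalition size. Concretely, to obtain $\Phi$ with $i >_\Phi j$ I would take the weighting vector with $\la_k = 1/\binom{n-1}{k-1}$ and $\la_{k'} = 0$ for all $k' \neq k$; then $\Phi(i) = c_i(k)/\binom{n-1}{k-1} > c_j(k)/\binom{n-1}{k-1} = \Phi(j)$. Symmetrically, placing all the weight at size $m$ (that is, $\la_m = 1/\binom{n-1}{m-1}$ with every other entry zero) yields an index $\Psi$ for which $\Psi(j) > \Psi(i)$, giving the reversed ranking.

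The only point that needs checking — and the one I would flag as the (mild) main obstacle — is that these single-point weighting vectors are legitimate. Nonnegativity is immediate, and the normalization $\sum_{k'=1}^n \la_{k'}\binom{n-1}{k'-1} = 1$ holds because the unique nonzero term contributes $\bigl(1/\binom{n-1}{k-1}\bigr)\binom{n-1}{k-1} = 1$; here $\binom{n-1}{k-1} > 0$ since $1 \leq k \leq n$, so there is no division by zero. Hence both $\Phi$ and $\Psi$ are genuine power indices and the two strict inequalities follow at once.
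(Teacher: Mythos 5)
Your proof is correct and is exactly the argument the paper has in mind: it states that the proposition ``follows easily from the definitions'' and omits the details, and your part (a) is the obvious termwise nonnegativity argument while your part (b) uses the natural single-point weighting vectors $\la_k = 1/\binom{n-1}{k-1}$, which you correctly verify satisfy the semivalue normalization. (Note the paper's statement of (b) has a typo --- ``$j <_\Psi i$'' should read ``$j >_\Psi i$'' --- and you have proved the intended claim.)
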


\section{Bicameral Voting Systems}

We look at power in a bicameral legislative system where bills are passed with a simple majority in each house.  We show that a member of the smaller house  has more power than a member of the larger house regardless of 
the choice of power index used to measure power, apart from the following case:  The smaller house has an odd
number of players, the larger house has an even number of players, and the larger house is less than twice the size of the smaller house.  
In this exceptional case, the choice of power index will determine whether the members of the smaller house or the larger house
have the most power.

Recall that for $n, k \in \nn$ with $0 \leq k \leq n$,
the {\bf binomial coefficient} $\binom{n}{k}$ denotes the number of ways of choosing $k$ elements from a set of $n$ elements. 
For the simple games we study in this work, formulas for the critical numbers involve products of binomial coefficients.  Results 
on binomial coefficients needed in this section can be found in the appendix.

Suppose we have a simple game with two types of players, say that there are $m_s$ senators and $m_r$ representatives, and we
want to count the number of coalitions that contain a specific senator and have exactly $x$ senators and $y$ representatives.  
Creating such a coalition
consists of choosing $x - 1$ players from $m_s - 1$ senators and choosing $y$ players from the $m_r$ representatives.  Thus there are
$\binom{m_s - 1}{x - 1} \cdot \binom{m_r}{y}$ such coalitions.   

\vsp
Let $c_s(k)$ denote the number of coalitions of size $k$ in which a senator is critical and define $c_r(k)$ similarly
for a representative. 

 \vsp

\begin{lemma}  \label{lem1} 
Suppose that  $q_s$ and $q_r$ are the the quotas for passage of a bill, i.e., the minimal winning coalitions
consist of $q_s$ senators and $q_r$ representatives.  Then

\begin{enumerate}[label=(\roman*)]
\item $c_s(k) \neq 0$ iff $q_s + q_r  \leq k \leq q_s + m_r$ and $c_r(k) \neq 0$ iff $ q_s + q_r  \leq k \leq q_r + m_s$.

\item  For $k \in \nn$ with $q_s + q_r  \leq k \leq q_s + m_r$,  $$c_s(k) = \binom{m_s - 1}{q_s - 1} \cdot \binom{m_r}{k - q_s}.$$

\item  For $k \in \nn$ with $q_s + q_r \leq k \leq q_r + m_s$,  $$c_r(k) = \binom{m_r - 1}{q_r - 1} \cdot \binom{m_s}{k - q_r}.$$

\end{enumerate}

\end{lemma}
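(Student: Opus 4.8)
The plan is to reduce everything to a clean combinatorial characterization of criticality and then count directly. Recall that in this system a coalition is winning exactly when it contains at least $q_s$ senators and at least $q_r$ representatives, and that a player is critical in a winning coalition precisely when deleting that player leaves a losing coalition. So I would first determine, structurally, which winning coalitions make a fixed senator (resp.\ representative) critical; then read off the number of such coalitions of each size $k$ using the counting formula recorded just before the lemma; and finally identify the values of $k$ for which that count is nonzero. Parts (ii) and (iii) come from the count, and part (i) from the nonvanishing analysis.

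For the characterization, fix a senator $i$ and let $S$ be a winning coalition containing $i$. Then $S$ has at least $q_s$ senators and at least $q_r$ representatives. Deleting $i$ leaves the representative count unchanged and lowers the senator count by one, so $S \setminus \{i\}$ fails to be winning if and only if $S$ contained exactly $q_s$ senators. Hence $i$ is critical in $S$ if and only if $S$ has exactly $q_s$ senators (one of which is $i$) and at least $q_r$ representatives. The symmetric statement for a representative $j$ is proved identically, interchanging the two houses: $j$ is critical exactly when its coalition has exactly $q_r$ representatives and at least $q_s$ senators.

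Next I would count. A coalition contributing to $c_s(k)$ is built by fixing $i$, choosing the remaining $q_s - 1$ senators from the other $m_s - 1$ senators, and choosing the $k - q_s$ representatives from the $m_r$ available. By the counting formula stated immediately before the lemma this gives $\binom{m_s-1}{q_s-1}\binom{m_r}{k-q_s}$, which is exactly (ii); part (iii) follows word for word with the roles of senators and representatives exchanged, giving $\binom{m_r-1}{q_r-1}\binom{m_s}{k-q_r}$.

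Finally I would pin down the nonvanishing range, which is the one place any care is needed. The factor $\binom{m_s-1}{q_s-1}$ is independent of $k$ and is positive since $1 \le q_s \le m_s$, so all the $k$-dependence sits in $\binom{m_r}{k-q_s}$, which is nonzero exactly when $q_s \le k \le q_s + m_r$. The subtlety is at the lower end: the binomial is already positive once $k \ge q_s$, but by the characterization above the coalitions it counts are genuinely critical (indeed genuinely winning) only once they contain at least $q_r$ representatives, i.e.\ $k - q_s \ge q_r$. Imposing this winning constraint sharpens the lower bound to $k \ge q_s + q_r$, yielding $q_s + q_r \le k \le q_s + m_r$ as in (i), and for $q_s \le k < q_s + q_r$ one has $c_s(k) = 0$ even though the binomial is positive, because such coalitions have too few representatives to win. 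The representative range $q_s + q_r \le k \le q_r + m_s$ is obtained the same way. I expect no real obstacle beyond keeping straight the distinction between \emph{the binomial being nonzero} and \emph{the coalition actually being a winning coalition}; the remainder is routine bookkeeping.
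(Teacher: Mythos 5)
Your proof is correct and follows essentially the same route as the paper's: characterize the coalitions in which a fixed senator (resp.\ representative) is critical as those with exactly $q_s$ senators (resp.\ exactly $q_r$ representatives) and enough members of the other house to win, then count by choosing the remaining members of each house. You simply spell out the details (in particular the lower-bound subtlety for the nonvanishing range) that the paper leaves as ``follow easily from these observations.''
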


\begin{proof} Since the minimal winning coalitions are exactly the coalitions with $q_s$ senators and $q_r$ representatives, the coalitions in which a fixed senator is critical consist of the senator plus $q_s - 1$ of the $m_s - 1$ other senators along with between $q_r$ and $m_r$ 
of the $m_r$ representatives.  The coalitions in which 
a particular representative is critical 
consist of the representative plus $q_r - 1$ of the $m_r - 1$ other representatives along with between 
$q_s$ and $m_s$ senators.  The assertions follow easily from these observations.
\end{proof}

The lemma implies that to show $c_s(k) > c_r(k)$ we must prove the following inequality:
\begin{equation}  \label{ineq1}
\binom{m_s - 1}{q_s - 1} \cdot \binom{m_r}{k - q_s} >   \binom{m_r-1}{q_r-1} \cdot \binom{m_s}{k - q_r}     \end{equation}    This inequality  is studied in the appendix.  \vsp

For the rest of this section we assume that $m_s < m_r$  and the quotas correspond to simple majorities,
 so that $q_s = \lceil (m_s + 1)/2 \rceil$ and  $q_r = \lceil (m_r +1)/2 \rceil$.   \vsp

\begin{lemma}  \label{lem1.5}    We have $m_r -q_r \geq m_s - q_s$.  Hence, by Lemma \ref{lem1}, $c_s(k) = 0$ implies $c_r(k) = 0$.
\end{lemma}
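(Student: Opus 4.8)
The plan is to establish the numerical inequality $m_r - q_r \geq m_s - q_s$ first, and then read off the implication about the critical numbers directly from Lemma~\ref{lem1}.

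For the inequality, I would begin by giving a closed form for the quantity $m - q$ when $q = \lceil (m+1)/2 \rceil$ is the simple-majority quota. Using the elementary identity $m - \lceil x \rceil = \lfloor m - x \rfloor$, valid for integer $m$, one gets
\[ m - q \;=\; m - \Bigl\lceil \tfrac{m+1}{2} \Bigr\rceil \;=\; \Bigl\lfloor \tfrac{m-1}{2} \Bigr\rfloor. \]
(Equivalently, splitting on parity: if $m$ is odd this equals $(m-1)/2$, and if $m$ is even it equals $m/2 - 1$.) The right-hand side is a non-decreasing function of the integer $m$, so the hypothesis $m_s < m_r$ forces $\lfloor (m_s - 1)/2 \rfloor \le \lfloor (m_r - 1)/2 \rfloor$, which is exactly $m_s - q_s \le m_r - q_r$.

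For the implication, I would rewrite the established inequality in the additive form $q_r + m_s \le q_s + m_r$. By Lemma~\ref{lem1}(i), $c_r(k)$ is nonzero precisely for $k$ in the interval $[\,q_s + q_r,\; q_r + m_s\,]$, while $c_s(k)$ is nonzero precisely for $k$ in $[\,q_s + q_r,\; q_s + m_r\,]$. These two intervals share the same left endpoint $q_s + q_r$, and the inequality $q_r + m_s \le q_s + m_r$ says that the right endpoint of the first does not exceed that of the second; hence the support of $c_r$ is contained in the support of $c_s$. Taking the contrapositive, if $k$ lies outside the support of $c_s$ (that is, $c_s(k) = 0$), then it lies outside the support of $c_r$ as well (that is, $c_r(k) = 0$), which is the claim.

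I do not expect any genuine obstacle here: the lemma reduces to one short arithmetic fact together with an interval-containment observation. The only point calling for care is the ceiling/floor bookkeeping in the first step, and in particular checking that the even and odd cases both yield the same monotone expression, since the parities of $m_s$ and $m_r$ are precisely what will drive the exceptional case analysed later in the paper.
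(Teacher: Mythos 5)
Your proof is correct and in substance matches the paper's: both arguments come down to computing $m-q$ explicitly for a simple-majority quota and observing that the resulting quantity is monotone in $m$, after which the implication about the critical numbers is immediate from the support intervals in Lemma~\ref{lem1}. The only difference is presentational: you package the computation into the single identity $m - \lceil (m+1)/2\rceil = \lfloor (m-1)/2\rfloor$ and invoke monotonicity of the floor, whereas the paper carries out the same arithmetic as a four-way case split on the parities of $m_s$ and $m_r$; your version is a clean, equivalent shortcut.
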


\begin{proof}  Suppose $m_r$ is odd, say $m_r = 2x + 1$, then $q_r = x +1$.  If $m_s = 2y +1$, then $q_s = y +1$ and $x > y$, hence 
$m_r - q_r = x  >  y = m_s - q_s$.  If $m_s = 2y$, then
$q_s = y + 1$ and $x \geq y$, hence $m_r - q_r = x >  y - 1 = m_s - q_s$.   Now suppose $m_r$ is even, say $m_r = 2x$, then $q_r = x + 1$.  If $m_s = 2y + 1$, then $x > y$ and
$m_r-q_r = x- 1 \geq y = m_s - q_s$.  If $m_s = 2y$, then $x > y$ and $m_r - q_r = x- 1 > y - 1 = m_s - q_s$.  
\end{proof}

\vsp

  \begin{prop} \label{cor1}   If  $q_s \cdot m_r > q_r \cdot m_s$, then
  $c_s(k) > c_r(k)$ for all $k$ such that $c_s(k) \neq 0$.

    \end{prop}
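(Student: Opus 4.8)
The plan is to reduce the statement to the binomial inequality (\ref{ineq1}) and then prove it by tracking the ratio $c_s(k)/c_r(k)$ as $k$ grows. By Lemma \ref{lem1}, $c_s(k)\neq 0$ exactly for $q_s+q_r\le k\le q_s+m_r$, whereas $c_r(k)\neq 0$ exactly for $q_s+q_r\le k\le q_r+m_s$; and Lemma \ref{lem1.5} gives $m_r-q_r\ge m_s-q_s$, i.e. $q_s+m_r\ge q_r+m_s$. Hence for $q_r+m_s<k\le q_s+m_r$ we have $c_r(k)=0<c_s(k)$ and there is nothing to prove. So I would fix attention on the overlap range $q_s+q_r\le k\le q_r+m_s$, where both critical numbers are positive, and it suffices to show the ratio $R(k):=c_s(k)/c_r(k)$ exceeds $1$ there.

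First I would evaluate $R$ at the bottom of the range. Using $\binom{m_s-1}{q_s-1}/\binom{m_s}{q_s}=q_s/m_s$ and $\binom{m_r}{q_r}/\binom{m_r-1}{q_r-1}=m_r/q_r$, Lemma \ref{lem1} gives $R(q_s+q_r)=\frac{q_s\,m_r}{q_r\,m_s}$, which is $>1$ precisely by the hypothesis $q_s m_r>q_r m_s$, establishing the base case.

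Next I would show $R$ is non-decreasing on the overlap range, which together with the base case finishes the proof. From Lemma \ref{lem1} and the identity $\binom{n}{j+1}/\binom{n}{j}=(n-j)/(j+1)$ one gets
$$\frac{R(k+1)}{R(k)}=\frac{(m_r-k+q_s)(k-q_r+1)}{(k-q_s+1)(m_s-k+q_r)}.$$
Clearing denominators (both are positive for consecutive pairs within the range), the inequality $R(k+1)\ge R(k)$ becomes a statement $D(k)\ge 0$, where the two $k^2$-terms cancel, so $D(k)$ is \emph{linear} in $k$ with leading coefficient $m_r-m_s>0$; hence $D$ is increasing and the binding case is the left endpoint $k=q_s+q_r$. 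A short computation reduces $D(q_s+q_r)\ge 0$ to
$$(q_s m_r-q_r m_s)+\big[(m_r-m_s)-(q_r-q_s)\big]\ge 0,$$
whose first term is positive by hypothesis. The remaining point—the one mild obstacle—is to check $q_r-q_s\le m_r-m_s$; this follows from the simple-majority formulas $q_s=\lceil(m_s+1)/2\rceil$, $q_r=\lceil(m_r+1)/2\rceil$ by the same four-case parity analysis used in Lemma \ref{lem1.5}. With both terms non-negative, $R$ is increasing, so $R(k)\ge R(q_s+q_r)>1$ throughout, giving $c_s(k)>c_r(k)$ as required.
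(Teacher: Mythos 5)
Your proof is correct and is essentially the paper's argument: the same case split on whether $c_r(k)$ vanishes, the same base case $c_s(q_s+q_r)/c_r(q_s+q_r)=q_sm_r/(q_rm_s)$, and the same step-ratio comparison driven by $m_r>m_s$ and Lemma \ref{lem1.5} --- you have simply inlined the content of Proposition \ref{main}(d) (your $D(k)\ge 0$ is precisely the condition $g(m_r,q_r,i)>g(m_s,q_s,i)$ from the appendix, and your ``linear with positive slope, so binding at the left endpoint'' replaces the induction in \ref{main}(b)). One small simplification: your final check $q_r-q_s\le m_r-m_s$ is literally a restatement of Lemma \ref{lem1.5}, so no separate parity analysis is needed.
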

    
    \begin{proof}   We need to show that  Inequality \eqref{ineq1} holds for all $k$ with $q_r + q_s \leq k \leq m_r + q_s$.

    Case 1:  If 
    $q_r + q_s  \leq k \leq q_r + m_s$, then $c_r(k) \neq 0$ and we use Proposition \ref{main} (d).  
    Since $q_s \cdot m_r > q_r \cdot m_s$, we need only show that 
    $(m_r - q_r)(q_s + 1) > (m_s - q_s)(q_r + 1).$  
    This is equivalent to $m_r q_s + m_r - q_r > m_s q_r + m_s - q_s$.   Since $m_r \cdot q_s > m_s \cdot q_r$ by assumption and
    $m_r - q_r \geq m_s  -q_s$ by Lemma \ref{lem1.5}, the inequality holds.  \vsp
    
    Case 2:  If $m_s + q_r < k \leq m_r + q_s$, then $c_r(k) = 0$ and $c_s(k) \neq 0$, by Lemma \ref{lem1} (a).  Thus
    $c_s(k) > c_r(k)$ is clear.
    
    \end{proof}
    
    Note that $q_s \cdot m_r > q_r \cdot m_s$ is equivalent to $q_s/m_s > q_r/m_r$ and thus   the proposition says that as long 
    $m_s < m_r$ and the proportion of the smaller house needed to pass a bill is larger than the proportion needed
    in the bigger house, then for any senator $\sen$ and any representative $\rep$, $\sen \succ \rep$.

\vsp

\begin{theorem}  \label{bicam_thm}  If $m_s < m_r$ and the quotas $q_s$ and $q_r$ correspond to simple majorities, 
then in each of the following cases, $c_s(k) > c_r(k)$ for all $k$ such that $c_s(k) \neq 0$:  
\begin{enumerate}[label=(\roman*)]

\item $m_s$ and $m_r$ are both odd,
\item $m_s$ and $m_r$ are both even, 
\item $m_s$ is even and $m_r$ is odd,
\item $m_s$ is odd, $m_r$ is even, and $m_r > 2m_s$.  

\end{enumerate}

Thus in these cases, for a senator $\sen$ and a representative $\rep$, $\sen >_\Phi \rep$  for any power index $\Phi$ that does
not assign $0$ power to both.

\end{theorem}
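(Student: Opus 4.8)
The plan is to reduce all four cases to Proposition \ref{cor1}. That proposition already delivers the exact conclusion we want, namely $c_s(k) > c_r(k)$ for every $k$ with $c_s(k) \neq 0$, under the single numerical hypothesis $q_s \cdot m_r > q_r \cdot m_s$. So the whole theorem comes down to verifying this one inequality in each parity case, after which Proposition \ref{cor1} does the rest.

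To carry this out I would first record the quotas in terms of half-sizes. Writing $m_s \in \{2y, 2y+1\}$ and $m_r \in \{2x, 2x+1\}$, the ceiling formula $q_s = \lceil (m_s+1)/2\rceil$ gives $q_s = y+1$ in both parities of $m_s$, and likewise $q_r = x+1$ in both parities of $m_r$; meanwhile $m_s < m_r$ translates to $y < x$ in cases (i), (ii), (iv) and to $y \le x$ in case (iii). Substituting these into $q_s m_r - q_r m_s$ and expanding gives a small polynomial in $x, y$: one finds $x - y$ in case (i), $2(x-y)$ in case (ii), and $2x - y + 1$ in case (iii). In each of these three, positivity is immediate from the corresponding translation of $m_s < m_r$, so Proposition \ref{cor1} applies with no further assumption.

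The hard part --- and the reason the statement isolates a fourth case carrying an extra hypothesis --- is case (iv), where $m_s = 2y+1$ is odd and $m_r = 2x$ is even. The same expansion here yields $q_s m_r - q_r m_s = x - 2y - 1$, and the standing assumption $m_s < m_r$ forces only $y < x$, which does not make this positive (e.g.\ $y = x-1$ gives $1 - x < 0$ for $x \ge 2$). This is exactly where the hypothesis $m_r > 2m_s$ is indispensable: it reads $2x > 2(2y+1)$, i.e.\ $x > 2y+1$, whence $x - 2y - 1 > 0$ and Proposition \ref{cor1} applies once more. The same computation explains the excluded case advertised in the abstract: for $m_s$ odd, $m_r$ even, and $m_r < 2m_s$, the inequality $q_s m_r > q_r m_s$ can genuinely fail, so that regime requires separate treatment.

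Finally, to pass from these critical-number inequalities to the power-index ranking I would combine them with Lemma \ref{lem1.5}. Having shown $c_s(k) > c_r(k)$ whenever $c_s(k) \neq 0$, and knowing from Lemma \ref{lem1.5} that $c_s(k) = 0$ forces $c_r(k) = 0$, for every $k$ at which $c_s(k)$ and $c_r(k)$ are not both zero we have $c_s(k) > c_r(k)$; by the definition of the strict weak desirability relation this is precisely $\sen \succ \rep$. For a power index $\Phi$ with weighting vector $(\la_1, \dots, \la_n)$, the termwise inequality $\la_k c_s(k) \ge \la_k c_r(k)$ gives $\Phi(\sen) \ge \Phi(\rep)$ as in Proposition \ref{compare}(a); and if $\Phi$ does not assign $0$ to both players then $\Phi(\sen) \ge \Phi(\rep) \ge 0$ forces $\Phi(\sen) > 0$, so some size $k$ has $\la_k > 0$ and $c_s(k) \neq 0$, at which $c_s(k) > c_r(k)$ makes the inequality strict. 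Hence $\sen >_\Phi \rep$, as claimed.
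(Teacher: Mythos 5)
Your proposal is correct and follows essentially the same route as the paper: reduce everything to Proposition \ref{cor1} by verifying $q_s \cdot m_r > q_r \cdot m_s$ case by case via the half-size parametrization, with the hypothesis $m_r > 2m_s$ entering only in case (iv) exactly where the paper uses it. Your closing paragraph deriving $\sen >_\Phi \rep$ from Lemma \ref{lem1.5} and Proposition \ref{compare} is a slightly more explicit spelling-out of a step the paper leaves implicit, but it is the same argument.
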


\begin{proof}  
By Proposition \ref{cor1}, we need only show that $ q_s \cdot m_r > q_r \cdot m_s$, equivalently,
$ q_s \cdot m_r - q_r \cdot m_s > 0$.
\vsp

Case (i).  If $m_s$ and $m_r$ are both odd, say $m_s = 2x + 1$ and $m_r = 2y + 1$ with $x < y$,  then $q_s = x + 1$, $q_r = y + 1$ 
and $m_s \cdot q_r - m_r \cdot q_s = y - x > 0$.
 \vsp 

Cases (ii) and (ii) are similarly 
easy to check.
\vsp

 Case(iv).  Suppose $m_s = 2 x + 1$,  $m_r = 2y$,  and $m_r > 2 m_s$.    Then 
 $q_s \cdot m_r - q_r \cdot m_s = y - 2x - 1$, hence $q_s \cdot m_r - q_r \cdot m_s > 0$ iff $y > 2x + 1 = m_s$ iff $2y > 
 4x + 2$, i.e., iff $m_r > 2 m_s$.   \vsp

\end{proof}

\subsection{The Exceptional Case}

We now consider the remaining case not covered by Theorem \ref{bicam_thm}:  $m_s$ is odd, $m_r$ is even, and $m_r \leq 2m_s$.    In this case, the relationship between the 
$c_s(k)$'s  and $c_r(k)$'s  is more complicated.   Assume $m_s = 2x + 1$ and $m_r = 2y$, so that $q_s =  x + 1$ and $q_r = y + 1$.  We first show that in this case, for the minimal
winning coalitions ($k = q_s + q_r$) the critical number for a member of the larger house  is greater than or equal to the critical number
for a member of the smaller house.    \vsp

\begin{lemma}  \label{excep}  With the above assumptions, $c_s(q_s + q_r) < c_r(q_s + q_r)$ if $m_r < 2m_s$.  If $m_r = 2m_s $, then $c_s(q_s + q_r)
= c_r(q_s + q_r)$ .  

\end{lemma}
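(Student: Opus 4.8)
The plan is to evaluate both critical numbers at the single value $k = q_s + q_r$ using the explicit formulas of Lemma \ref{lem1}, and then collapse the comparison to an elementary inequality. At $k = q_s + q_r$ one has $k - q_s = q_r$ and $k - q_r = q_s$, so parts (ii) and (iii) of Lemma \ref{lem1} give
$$c_s(q_s + q_r) = \binom{m_s-1}{q_s-1}\binom{m_r}{q_r}, \qquad c_r(q_s + q_r) = \binom{m_r-1}{q_r-1}\binom{m_s}{q_s}.$$
Substituting the parametrization $m_s = 2x+1$, $q_s = x+1$, $m_r = 2y$, $q_r = y+1$ turns these into
$$c_s(q_s+q_r) = \binom{2x}{x}\binom{2y}{y+1}, \qquad c_r(q_s+q_r) = \binom{2y-1}{y}\binom{2x+1}{x+1}.$$

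Next I would normalize each product against the central binomial coefficients $\binom{2x}{x}$ and $\binom{2y}{y}$ via three standard identities: $\binom{2y}{y+1} = \frac{y}{y+1}\binom{2y}{y}$, then $\binom{2y-1}{y} = \frac{1}{2}\binom{2y}{y}$ (from $\binom{2y}{y} = 2\binom{2y-1}{y}$), and $\binom{2x+1}{x+1} = \frac{2x+1}{x+1}\binom{2x}{x}$. After this rewriting both critical numbers carry the common strictly positive factor $\binom{2x}{x}\binom{2y}{y}$, so the sign of $c_s(q_s+q_r) - c_r(q_s+q_r)$ is governed entirely by the comparison of the rational factors $\frac{y}{y+1}$ and $\frac{2x+1}{2(x+1)}$.

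Finally I would clear denominators: since both are positive, $\frac{y}{y+1} < \frac{2x+1}{2(x+1)}$ is equivalent to $2y(x+1) < (2x+1)(y+1)$, which simplifies to $y < 2x+1$, and equality of the two fractions corresponds to $y = 2x+1$. Translating back, $y < 2x+1$ is exactly $2y < 2(2x+1)$, i.e.\ $m_r < 2m_s$, while $y = 2x+1$ is exactly $m_r = 2m_s$; this yields both assertions of the lemma at once. I do not expect a genuine obstacle here: the entire content sits in the bookkeeping of the three binomial identities, so the only point requiring care is carrying out the simplifications without arithmetic slips and confirming that the relevant identities (collected in the appendix) are applied in the intended direction.
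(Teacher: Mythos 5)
Your proof is correct: the formulas you extract from Lemma \ref{lem1} at $k=q_s+q_r$ are right, the three binomial identities are applied in the correct direction, and the final reduction to $y<2x+1$ versus $y=2x+1$ matches the statement exactly. The route differs slightly from the paper's: the paper simply invokes Lemma \ref{mincase} from the appendix, which packages the single ratio identity $\binom{m}{q}/\binom{m-1}{q-1}=m/q$ and reduces the comparison at $k=q_s+q_r$ immediately to the sign of $q_sm_r-q_rm_s$, and then computes $q_sm_r-q_rm_s=2y(x+1)-(2x+1)(y+1)=y-(2x+1)$. Your normalization against the central binomial coefficients $\binom{2x}{x}$ and $\binom{2y}{y}$ arrives at the identical inequality $2y(x+1)$ versus $(2x+1)(y+1)$, so in substance you have re-proved the relevant special case of Lemma \ref{mincase} by hand. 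What the paper's approach buys is brevity and reuse of the appendix machinery (which is also needed elsewhere); what yours buys is a self-contained verification that does not depend on the parity-free general lemma. Both are sound.
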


\begin{proof}  By Lemma \ref{mincase}, $c_s(q_s + q_r) <  c_r(q_s + q_r)$ if $q_s m_r  < q_r m_s$ and $c_s(q_s + q_r) = c_r(q_s + q_r)$ if $q_s m_r  = q_r m_s$.  
If $m_r \leq  2m_s$, then  $ y < 2x + 1$.  We have 
$q_s m_r  - q_r m_s  = 2y(x + 1)  - (2x + 1)(y + 1) = y - (2x +1)$.  If $m_r < 2m_s$, then $y < 2x + 1$ and $q_s m_r  - q_r m_s <0$, 
hence $q_s m_r  < q_r m_s$.
If $m_r = 2m_s$, then $y = 2x + 1$ and $q_s m_r  < q_r m_s$.   
\end{proof}

The relationship in the case where  $m_r = 2m_s$, i.e., the gap between the sizes of the two houses is as large
as possible, is almost the same as in
the non-exceptional cases:  \vsp

\begin{prop}   Suppose $m_s = 2x + 1$ and $m_r = 2m_s = 4x + 2$.  Then 

\begin{enumerate}

\item $c_s(k) \neq 0$ for $3x + 3 \leq k \leq 5x + 3$.

\item $c_s(3x+3)  = c_r(3x+3)$

\item $c_s(k) > c_r(k)$  for $3x+4 \leq k \leq 5x + 3$.

\end{enumerate}
It follows that $\sen >_\Phi \rep$ for any power index apart from the one that assigns weight $\la_k = 0$ for all $k \neq 3x + 3$, which ranks $\sen$ and $\rep$ equally.  

\end{prop}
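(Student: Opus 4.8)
The plan is to dispatch the three numbered claims in turn, the first two being immediate consequences of facts already in hand and the third carrying all the work. Throughout I would record that, with $m_s = 2x+1$ and $m_r = 4x+2$, the simple-majority quotas are $q_s = x+1$ and $q_r = 2x+2$, so that $q_s + q_r = 3x+3$, $\ q_s + m_r = 5x+3$, and $q_r + m_s = 4x+3$. Part (a) is then exactly Lemma \ref{lem1}(i): $c_s(k) \neq 0$ iff $q_s + q_r \leq k \leq q_s + m_r$, i.e.\ on $3x+3 \leq k \leq 5x+3$. Part (b) is the $m_r = 2m_s$ case of Lemma \ref{excep}, which gives $c_s(q_s+q_r) = c_r(q_s+q_r)$, and $q_s+q_r = 3x+3$.

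For part (c) I would split the range $3x+4 \leq k \leq 5x+3$ at $k = 4x+3 = q_r + m_s$. For $4x+4 \leq k \leq 5x+3$ we have $k > q_r + m_s$, so $c_r(k) = 0$ by Lemma \ref{lem1}(i) while $c_s(k) > 0$ by part (a); hence $c_s(k) > c_r(k)$ trivially. On the remaining band $3x+4 \leq k \leq 4x+3$ both critical numbers are nonzero and the goal is Inequality \eqref{ineq1}. The difficulty is that here $q_s m_r = q_r m_s$ (the equality computed in Lemma \ref{excep}), so Proposition \ref{cor1} does not apply verbatim. Instead I would show the ratio $c_s(k)/c_r(k)$ is \emph{strictly increasing} in $k$ across the band; since it equals $1$ at $k = 3x+3$ by part (b), it is then $>1$ for every larger $k$, which is exactly $c_s(k) > c_r(k)$.

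To see the monotonicity, write $P(k) = c_s(k)$ and $Q(k) = c_r(k)$ and use Lemma \ref{lem1}(ii),(iii) to compute the step ratios, giving
$$\frac{P(k+1)/P(k)}{Q(k+1)/Q(k)} = \frac{(m_r - k + q_s)(k+1-q_r)}{(k+1-q_s)(m_s - k + q_r)}.$$
Both denominator factors are positive on the band, so this exceeds $1$ precisely when $h(k) := (m_r - k + q_s)(k+1-q_r) - (m_s - k + q_r)(k+1-q_s) > 0$. Expanding, $h$ is linear in $k$ with positive slope $m_r - m_s$, and its value at the left endpoint is $h(q_s+q_r) = (m_r - q_r)(q_s+1) - (m_s - q_s)(q_r+1)$, which here evaluates to $x > 0$; this is exactly the growth mechanism underlying Proposition \ref{main}(d) already used in Proposition \ref{cor1}. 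Since $h$ is increasing and positive at the left endpoint, $h(k) > 0$ throughout, the ratio increases, and part (c) follows.

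For the ranking, I would write $\Phi(\sen) - \Phi(\rep) = \sum_k \la_k\bigl(c_s(k) - c_r(k)\bigr)$. By parts (a)--(c) every summand is nonnegative: the $k = 3x+3$ term vanishes, the terms for $3x+4 \leq k \leq 5x+3$ are strictly positive times $\la_k \geq 0$, and all other differences are $0$. Hence $\sen \geq_\Phi \rep$ always, with equality exactly when $\la_k = 0$ for all $k \in \{3x+4,\dots,5x+3\}$; in particular the index concentrating all weight at $k = 3x+3$ ranks the two equally, while any index giving positive weight somewhere above $3x+3$ yields $\sen >_\Phi \rep$. The main obstacle is precisely the band $3x+4 \leq k \leq 4x+3$ of part (c): because the base case is an equality rather than a strict inequality, Proposition \ref{cor1} is unavailable, and the real content is verifying that $h(q_s+q_r) > 0$ and that the linear $h$ keeps the step ratio strictly in the senators' favor across the whole band.
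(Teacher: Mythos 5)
Your proof is correct and takes essentially the same route as the paper: parts (a) and (b) are cited identically, and your step-ratio monotonicity argument for (c) is exactly the paper's $f$/$g$ machinery (Lemma \ref{fg}, Proposition \ref{main}(b)--(c), Corollary \ref{cor2}) re-derived inline, with your base check $h(q_s+q_r) = (m_r - q_r)(q_s+1) - (m_s - q_s)(q_r+1) = x > 0$ amounting to the same computation as the paper's verification of inequality \eqref{ineq1} at $k = 3x+4$ via Proposition \ref{main}(a). The only cosmetic difference is that you anchor the propagation at the equality $c_s(3x+3) = c_r(3x+3)$ and push the ratio strictly upward, whereas the paper checks the single strict case $k = 3x+4$ and then invokes Corollary \ref{cor2}.
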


\begin{proof}   We have $q_s = x + 1$ and $q_r = 2x + 2$.

(a) follows from Lemma \ref{lem1}.

(b)   follows from Lemma \ref{excep}.

(c) We need to prove that inequality \eqref{ineq1} holds for all $k$ in the given range.    By Corollary \ref{cor2} in the appendix, it is enough to prove it for $k = q_r + q_s + 1 = 3x + 4$.  
By Proposition \ref{main} (a) and using the fact that 
$m_r = 2m_s$ and $q_r = 2q_s$, the inequality holds iff  $$\frac{4m_s(m_s-q_s)}{2q_s(2q_s +1)} > \frac{m_s(m_s-q_s)}{q_s(q_s+1)}, $$
which is equivalent to 
$2q_s +2 > 2q_s + 1$.   

\end{proof}

We now look at the remaining ``extreme" case  within the exceptional case, i.e., the case where the gap between the two houses is as
small as possible.
 \vsp

\begin{prop}  Suppose  $m_s = 2x + 1$ and $m_r = 2x + 2$ so that $q_s = x + 1$ and $q_r = x + 2$.  Then

\begin{enumerate}
\item    $c_s(k) \neq 0$ for
$2x + 3 \leq k \leq 3x + 3$ and $c_r(k) \neq 0$ iff $c_s(k) \neq 0$ 

\item  $c_s(k) < c_r(k)$ for all
$k$ such that $c_s(k) \neq 0$.  

\end{enumerate}
It follows that for any representative $\rep$ and any senator $\sen$, $\rep \succ \sen$ in this case.

\end{prop}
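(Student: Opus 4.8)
The plan is to obtain part (a) directly from Lemma~\ref{lem1} and to reduce part (b) to a single transparent inequality between two quadratics in $x$ by simplifying the ratio $c_s(k)/c_r(k)$.

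For part (a) I would substitute $q_s = x+1$ and $q_r = x+2$ into Lemma~\ref{lem1}~(i). The condition $c_s(k)\neq 0$ becomes $q_s+q_r=2x+3 \le k \le q_s+m_r=3x+3$, while $c_r(k)\neq 0$ becomes $q_s+q_r=2x+3\le k\le q_r+m_s=3x+3$. These two ranges coincide exactly, which simultaneously establishes the stated range for $c_s$ and the equivalence $c_r(k)\neq0 \iff c_s(k)\neq0$.

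For part (b), using Lemma~\ref{lem1}~(ii),(iii) I would write $c_s(k)=\binom{2x}{x}\binom{2x+2}{k-x-1}$ and $c_r(k)=\binom{2x+1}{x+1}\binom{2x+1}{k-x-2}$ and compute their ratio. The $x$-only factor collapses to $\binom{2x}{x}/\binom{2x+1}{x+1}=(x+1)/(2x+1)$, and the $k$-dependent factor to $\binom{2x+2}{k-x-1}/\binom{2x+1}{k-x-2}=(2x+2)/(k-x-1)$, giving the clean expression
$$\frac{c_s(k)}{c_r(k)}=\frac{2(x+1)^2}{(2x+1)(k-x-1)}.$$
Because $k$ lies in $[2x+3,3x+3]$, the factor $k-x-1$ is at least $x+2$, so the ratio is largest at $k=2x+3$; there the inequality $c_s(k)<c_r(k)$ amounts to $2(x+1)^2<(2x+1)(x+2)$, i.e. $2x^2+4x+2<2x^2+5x+2$, whose difference is exactly $x$. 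Hence the ratio is strictly below $1$ throughout the range (for $x\ge1$), proving $c_s(k)<c_r(k)$.

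Finally, the conclusion $\rep\succ\sen$ follows from the definition of the strict weak desirability relation: by part (a) the values of $k$ for which $c_s(k)$ and $c_r(k)$ are not both zero are exactly those in $[2x+3,3x+3]$, and on that set part (b) gives $c_r(k)>c_s(k)$. I do not expect a genuine obstacle here; the only real work is the binomial bookkeeping, and the single point to watch is that strictness requires $x\ge1$ (when $x=0$, i.e. $m_s=1,\ m_r=2$, the lone nonzero value gives $c_s=c_r$). This argument mirrors, in the reverse direction, the reduction used in the preceding proposition, so one could alternatively combine the base case from Lemma~\ref{excep} with the monotonicity established in the appendix (Corollary~\ref{cor2}).
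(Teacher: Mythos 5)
Your proof is correct, and part (b) takes a genuinely different route from the paper. Part (a) is the same in both: substitute into Lemma \ref{lem1}(i) and observe that the two ranges coincide at $[2x+3,\,3x+3]$. For part (b), the paper does not compute the ratio; it checks only the two endpoints --- $c_s(2x+3)<c_r(2x+3)$ via Lemma \ref{excep} and $c_s(3x+3)<c_r(3x+3)$ by a one-line binomial comparison --- and then invokes the monotonicity of Corollary \ref{cor2} (if $c_s(k)>c_r(k)$ ever held, it would propagate up to $k=3x+3$, a contradiction). Your closed form
$$\frac{c_s(k)}{c_r(k)}=\frac{2(x+1)^2}{(2x+1)(k-x-1)}$$
is the alternative you yourself flag at the end, and it buys two things the paper's argument does not immediately give: first, the endpoint-plus-propagation argument literally only rules out $c_s(k)>c_r(k)$ at interior $k$, so strictness of $c_s(k)<c_r(k)$ there requires an extra remark, whereas your ratio is visibly $<1$ throughout; second, your explicit identification of the degenerate case $x=0$ (where $m_s=1$, $m_r=2$ and the lone critical numbers are equal) is a genuine boundary condition that the paper's claimed strict inequality at $k=3x+3$ silently assumes away. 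The trade-off is that the paper's method scales to situations where the ratio does not collapse to a single rational function (it is the same template used elsewhere in Section 3.1), while your computation exploits the special values $m_r=m_s+1$, $q_r=q_s+1$.
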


\begin{proof} 
(a)  The minimal winning coalitions have size $q_s + q_r = 2x + 3$ and the largest coalition for which $\sen$ or $\rep$
is critical has size $q_s + m_r = 3x + 3 = q_r + m_s$.   

(b) $c_s(2x+3) < c_r(2x+3)$ by Lemma \ref{excep}.   By Corollary \ref{cor2} in the appendix, if
$c_s(k) > c_r(k)$ for some $k > 2x +3$, then $c_s(i) > c_r(i)$ for all $i \geq k$.  Thus it is enough to
show that this fails for the largest possible value of $k$, i.e.,  it is enough to show that
$c_s(3x + 3) < c_r(3x + 3)$, which is easy:
$$c_s(3x+3) = \binom{2x}{x} \cdot \binom{2x+2}{2x+2} < \binom{2x + 1}{x + 1} \cdot \binom{2x+1}{2x+1} = c_r(3x+3).$$

\end{proof}

Here is a summary of what happens in this exceptional case.  
Fix $m_r$ and $m_s$ and assume $m_r \neq 2m_s$.  As noted above,  we have $q_s \cdot m_r > q_r \cdot m_s$ so that the proportion of the 
smaller house needed for passage is less than the proportion that
the bigger house needs.  This shifts the advantage to the bigger house for the minimal winning coalitions.   Then in the first extreme case,  i.e., 
when the gap between the sizes of the
two houses is as large as possible, the advantage only helps for minimal winning coalitions, so that for all other sizes of
coalitions, the smaller house has the advantage.  
  In the second extreme case, i.e.,   
  when the gap between the size of the two houses is as small as possible, the advantage stays with the bigger house for all 
  coalition sizes.   
   \vsp

In cases in between the two extremes, what happens is that the bigger house starts out with an advantage, then
at some point the advantage shifts to the smaller house and remains there as the sizes of the coalitions increase.  For the
biggest possible gap, this shift occurs as soon as the coalitions are no longer minimal.   For the first extreme case, this
shift never happens.   The bigger the gap, the sooner the shift will happen.
As an example, consider a case in between the two extremes:  $m_s = 101$ and $m_r = 150$, so that $q_s = 51$ and $q_r = 76$.
Then we have $c_s(k) < c_r(k)$ for the two smallest values of $k$ ($k = 127$ and $k = 128$)  $c_s(k) > c_r(k)$ for the remaining 
values of $k$ for which $c_s(k) \neq 0$.    \vsp

What this means is that in the first extreme case (gap between the size of the houses is as large as possible),  for all 
power indices $\phi$, members of the smaller house have more power than members of the larger house.  For 
the second extreme case (gap between the size of the houses is as small as possible), the members of the larger
house have more power than the members of the smaller house for any power index.   Finally, for the case in between
the two extremes, there will be power indices for which members of the smaller house have more power and others
for which member of the larger house have more power.


\subsection{Generalization to Multi-cameral Legislatures}

The results on bicameral legislatures generalize relatively easily to a legislative body with any number of
houses.  Assume that we have $n$ houses, denoted $\mcc{H}_1, \dots , \mcc{H}_n$ and that $\mcc{H}_j$ has
$m_j$ members.     The relationship between the power of a member of $\mcc{H}_j$ and $\mcc{H}_l$ is the same as
the relationship they would have if there were only two houses.   Intuitively, this makes sense because being critical in a coalition for
a particular member of a house is independent of the makeup of the coalitions in other houses.  \vsp

For each $j$, let $q_j$ denote the minimum number of votes needed in $\mcc{H}_j$ to pass a motion.  
The following notation will be useful.  Let $[n]$ denote $\{ 1, 2, \dots , n \}$.  Given $I \sse [n]$ and $k \in \nn$, let $U_I(k)$ denote the number
of different subsets $S \sse \cup_{j \in I} \mcc{H}_j$ of size $k$ such that for each $j \in I$, $\lvert S \cap \mcc{H}_j \rvert \geq q_j$.
In other words, $U_I(k)$ is the number of ways of building a set of coalitions, one from each house in $\{ \mcc{H}_j \}_{j \in I}$,
such that each coalition in $\mcc{H}_j$ meets the threshold $q_j$ needed to pass legislation.  Then
$0 < U_I(k) \leq \sum_{j \in I} m_j$.
Using this notation, notice that for a fixed $j$, a specific member $r \in \mcc{H}_j$,  and $k \in \nn$ we have
$$c_r(k) = \binom{m_j - 1}{q_j - 1} \cdot U_{[n] \setminus \{ j \}} (k - q_j).$$

\begin{theorem} \label{multicam_thm}  With notation as above, suppose that each $q_j$ represents a simple majority.
 Let $r \in \mcc{H}_j$ and $s \in \mcc{H}_l$, where $j \neq l$.   Then 

\begin{enumerate}
\item   For each of the following cases, $c_s(k) > c_r(k)$ for all $k \in \nn$ such that $c_s(k) \neq 0$:  $m_j < m_l$ and $m_j$ and $m_l$ are both odd, both even,
$m_j$ is even and $m_l$ is odd, or $m_j$ is odd, $m_l$ is even and $m_l > 2 m_j$.

\item  If $m_j$ is odd and $m_l$ is even, then the relationship between $c_s(k)$ and $c_r(k)$ mirrors the relationship detailed in
Section 3.1.
\end{enumerate}
\end{theorem}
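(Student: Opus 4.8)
The plan is to reduce the $n$-house comparison to the two-house situation of Theorem \ref{bicam_thm} and Section 3.1 by \emph{conditioning} on the members drawn from the houses other than $\mcc H_j$ and $\mcc H_l$. Write $I=[n]\setminus\{j,l\}$. The guiding observation, already noted before the theorem, is that a fixed player is critical in a coalition exactly when its own house sits precisely at quota and every other house meets its quota; the internal make-up of the houses in $I$ enters only through a common nonnegative weight, so it should factor out of the comparison.

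Concretely, I would begin from the displayed identity $c_r(k)=\binom{m_j-1}{q_j-1}\,U_{[n]\setminus\{j\}}(k-q_j)$ and split the count $U_{[n]\setminus\{j\}}$ by the number of members taken from $\mcc H_l$ as opposed to from the houses of $I$. Since the houses are disjoint, $U_{[n]\setminus\{j\}}(u)=\sum_{a\ge q_l}\binom{m_l}{a}\,U_I(u-a)$, and reindexing by the count $t$ of members drawn from $I$ gives
\[
c_r(k)=\sum_{t}\binom{m_j-1}{q_j-1}\binom{m_l}{k-t-q_j}\,U_I(t),\qquad
c_s(k)=\sum_{t}\binom{m_l-1}{q_l-1}\binom{m_j}{k-t-q_l}\,U_I(t).
\]
By Lemma \ref{lem1}, the maps $t\mapsto\binom{m_j-1}{q_j-1}\binom{m_l}{t-q_j}$ and $t\mapsto\binom{m_l-1}{q_l-1}\binom{m_j}{t-q_l}$ are precisely the critical numbers $a_r$ and $a_s$ of members of $\mcc H_j$ and $\mcc H_l$ in the \emph{bicameral} game on those two houses alone. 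Thus $c_r=a_r*U_I$ and $c_s=a_s*U_I$, so
\[
c_s(k)-c_r(k)=\sum_{t}\bigl(a_s(k-t)-a_r(k-t)\bigr)\,U_I(t),
\]
where every weight $U_I(t)$ is a nonnegative integer (the case $I=\emptyset$ recovers the bicameral statement verbatim).

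Part (a) then follows immediately. In each listed case Theorem \ref{bicam_thm}, applied to the two-house game on $\mcc H_j$ and $\mcc H_l$, shows that the member of the smaller house is critical strictly more often than the member of the larger house at every size where the former's critical number is nonzero, and (Lemma \ref{lem1.5}) the former's critical number vanishes only when the latter's does; hence the bicameral difference $a_s-a_r$ is of a single sign across all sizes. Because $U_I(t)\ge 0$, every summand in the displayed difference carries that same sign, so $c_s-c_r$ does too; and choosing a single $t$ for which both the dominating bicameral term and $U_I(t)$ are nonzero — such a $t$ exists precisely because the relevant $n$-house critical number is nonzero — upgrades this to a strict inequality. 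This establishes part (a), with the same house dominating as in the two-house game.

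The real work is part (b). In the exceptional bicameral case the difference $a_s-a_r$ is no longer one-signed: by Lemma \ref{excep} it is $\le 0$ at the minimal coalition size, and by the Section 3.1 analysis (Corollary \ref{cor2}) it changes sign at most once, from $\le 0$ to $>0$, and stays positive afterwards. I want this single-crossover shape to survive convolution against $U_I$, so that the $n$-house difference $c_s-c_r$ again begins $\le 0$ and flips once, with the location of the flip governed by the gap $m_l-m_j$ exactly as in Section 3.1. This is a variation-diminishing assertion, and the subtle point is that convolving a sequence having one sign change against a merely nonnegative sequence can in principle create extra sign changes. My route would be to control $U_I$ via total positivity: each factor $\sum_{a\ge q_h}\binom{m_h}{a}z^a$ is a contiguous truncation of the binomial P\'olya frequency sequence, and I would verify that truncation and convolution keep $U_I$ in the P\'olya frequency (totally positive) class; granting this, the classical variation-diminishing property forces $c_s-c_r$ to change sign at most once, which is the content of ``mirrors the relationship detailed in Section 3.1''. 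I expect establishing this total-positivity step — rather than the bookkeeping of the convolution identity — to be the genuine obstacle: a log-concavity argument alone (closure of internal-zero-free log-concave sequences under convolution) pins down the shape of $U_I$ but not automatically the sign-change bound, so some care beyond log-concavity is required.
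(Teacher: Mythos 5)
Your part (a) is correct and is essentially the paper's own proof: the paper writes the same double sum, grouped by the overshoot $d$ above quota in the other house rather than by the number $t$ drawn from the remaining houses, and then compares term by term against the common nonnegative weight $U_{[n]\setminus\{j,l\}}$ using Proposition \ref{main} exactly as in Theorem \ref{bicam_thm}. The two groupings are the same computation, and your explicit attention to where strictness comes from (a $t$ with both factors nonzero) is a small improvement over the paper, which leaves that implicit.

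For part (b) you have correctly located the difficulty, but your proposed repair does not go through and you have not completed a proof. The paper disposes of (b) with the single sentence ``the same argument works,'' which is literally true only in the two extreme sub-cases of Section 3.1, where the bicameral difference of critical numbers is one-signed (it favors the larger house at every size when $m_l=m_j+1$, and weakly favors the smaller house, with equality only at the minimal size, when $m_l=2m_j$); there the term-by-term comparison transfers. In the intermediate sub-cases the bicameral kernel changes sign once, and, as you observe, convolving a once-sign-changing sequence with a merely nonnegative weight can create additional sign changes, so something more is needed to get the single-crossover conclusion. Your proposed something more --- that $U_I$ is a P\'olya frequency sequence, so the variation-diminishing property caps the sign changes of $c_s-c_r$ at one --- fails at exactly the step you flagged: by Aissen--Schoenberg--Whitney a finite nonnegative sequence is PF iff its generating polynomial is real-rooted, and truncation at the quota destroys real-rootedness. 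Already for a single extra house with $m_h=4$ and $q_h=2$ the weight has generating polynomial $z^2(6+4z+z^2)$, whose quadratic factor has complex roots, so even $U_{\{h\}}$ is not PF and the variation-diminishing theorem is not available as invoked. So your argument for (b) has a genuine gap in the intermediate sub-cases; to be fair, the statement of (b) is vague and the paper's own one-line proof does not establish the single-crossover claim there either.
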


\begin{proof}  We can build all coalitions of size $k$  in which $r$ is critical as follows: Choose $q_j - 1$ of the $m_j - 1$ members
of  $\mcc{H}_j$ that are not $r$, then choose $q_l + d$ members of $\mcc{H}_l$, where $d$ ranges from $0$ to $m_l - q_l$, and 
finally choose, if possible,  subsets of the remaining houses which meet the minimum so that the size of the resulting winning coalition is $k$.   Thus 
 $$c_r(k) =    \sum_{d=0}^{m_l - q_l}  \binom{m_j - 1}{q_j - 1} \cdot  \binom{m_l}{q_l + d} \cdot 
U_{[n] - \{ i,j \}} (k - q_j - q_l - d),$$ and similarly, 
$$ c_s(k) =   \sum_{d=0}^{m_j - q_j}\binom{m_l - 1}{q_l - 1} \cdot  \binom{m_j}{q_j + d} \cdot 
U_{[n] - \{ i,j \}} (k - q_j - q_l - d).$$   

(a)  It is easy to check in this case that $m_l - q_l > m_j - q_j$ and thus, using the formulas above, to show $c_r(k) > c_s(k)$ it
is enough to show that for $0 \leq d \leq m_j - q_j$, $\binom{m_j - 1}{q_j - 1} \cdot  \binom{m_l}{q_l + d}  >
\binom{m_l - 1}{q_l - 1} \cdot  \binom{m_j}{q_j + d}$.  This follows from Proposition \ref{main}  exactly as in the proof
of Theorem \ref{bicam_thm}.

(b)  The same argument works in this case.

\end{proof}

\section{The US Legislative System}

We apply our results on bicameral legislative systems to study power in the US legislative system.  
We model the US legislative system as a simple game with 537 players:  
the president, vice president, 100 senators in the Senate, and 435 representatives in  the House of Representatives.
A bill passes if a majority of the senators and a majority of the representatives
vote yes and the president  signs the bill.  If the president does not sign the bill, it can be passed with a supermajority of at least 67 
senators and 290 representatives.   The role of the vice president is to break ties in the Senate.  For winning coalitions in which the president 
is critical, the vice president plays the same role as a senator; in these cases we can assume
that the senate contains 101 players.  We will call the set of senators plus the vice president the ``full Senate".

There are two types of minimal winning coalitions:  

\begin{itemize}
\item[I.] 51 from the full Senate, 218 
representatives, and the president;

\item[II.]  67 senators and 290 representatives.

\end{itemize}

We look at critical instances for the four types of players in order to compare the critical numbers.   Note that if a winning coalition contains exactly $51$ senators, then every senator is critical and adding the
vice president yields a coalition in which no senator is critical.  Apart from this case, if a player who is not the vice president is critical in a coalition that
does not contain the vice president, then this player is still critical if the vice president is added to the coalition.   

For ease of exposition, we write $\prs$ and $\vp$ for the president and vice-president and let $\sen$ be a fixed senator and
$\rep$ a fixed representative.  
We write $c_p(k)$ (resp. $c_v(k)$, $c_s(k)$, $c_r(k)$) for the number of coalitions of size $k$ in which the president
(resp. the vice-president, a senator, a representative) is critical.  The following table lists the different types of coalitions, along with their sizes, in which
 the president (P1 - P4), a senator (S1 - S3),  a  representative (R1 - R3), or the vice president (V) 
 are critical, along with the possible sizes.  \\[.2cm]

\captionof{table}{Critical numbers in the US system} \label{tab1}
\begin{tabular} {| c | c | c |}
\hline
Type & Members & Size \\
\hline
P1 & $\prs$, $51 - 66$ from the Senate, $218 - 435$ representatives & $270-502$ \\
\hline
P2 & $\prs$, $\vp$, $50 - 66$ from the Senate , $218 - 435$ representatives & $270-503$ \\
\hline
P3 & $\prs$,  $67 - 100$ from the Senate, $218 - 289$ representatives & $286-390$ \\
\hline
P4 & $\prs$,  $\vp$, $67 - 100$ from the Senate, $218 - 289$ representatives & $287-391$ \\
\hline
S1 & $\prs$, $\sen$, 50 others from the full Senate , 218-435 representatives & $270 - 487$ \\
\hline
S2 &  $\sen$, $66$ other senators, $290 - 435$ representatives & $357 - 502$ \\
\hline
S3 &  $\sen$, $66$ other senators, $290 - 435$ representatives, $\vp$ & $358 - 503$ \\
\hline
R1 & $\prs$, $\rep$, $217$ other representatives, $51 - 101$ from the full Senate & $270 - 320$ \\
\hline
R2 & $\rep$, $289$ other representatives, $67$ senators & $357$ \\
\hline
R3 & $\rep$, $289$ other representatives, $68 - 101$ from the full Senate & $358 - 391$ \\
\hline
V & $\vp$, $\prs$, $50$ from the Senate, and $218-435$ representatives & $270 - 487$ \\ 
\hline
\end{tabular}

\vsp \vsp

\begin{prop}  \label{p}  

\begin{enumerate}
\item If $c_p(k) = 0$, then $c_v(k) = c_r(k) = c_s(k) = 0$.

\item For all $k \in \nn$ such that  $c_p(k) \neq 0$,  $c_p(k) > c_v(k)$
\item  For all $k \in \nn$ such that  $c_p(k) \neq 0$, $c_p(k) > c_s(k)$.
\end{enumerate}

\end{prop}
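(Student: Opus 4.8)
The three claims are statements about the critical-number functions one reads off from Table~\ref{tab1}, and the plan is to handle (a) by a range comparison and (b),(c) by size-preserving injections into the coalitions in which $\prs$ is critical. For part (a) I would first record that $c_p(k)\neq 0$ exactly on the block $270\le k\le 503$: the four president-types P1--P4 each contribute a contiguous interval of sizes (each size is $1$ or $2$ plus a Senate count plus a House count, with both counts ranging over intervals, so the achievable sizes form an interval), and P2 alone already sweeps out $[270,503]$ with no gaps. Since the size ranges for $\vp$, $\sen$, and $\rep$ in Table~\ref{tab1} all lie inside $[270,503]$ (they begin at $270$ and end no later than $503$), any $k$ with $c_p(k)=0$ lies outside every one of those ranges, forcing $c_v(k)=c_s(k)=c_r(k)=0$.

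For part (b) the key observation is a containment of collections: every coalition counted by $c_v(k)$ is of type V, hence contains $\prs$ together with exactly $50$ senators and $\vp$ (so that $\vp$'s tie-break is decisive) and at least $218$ representatives; deleting $\prs$ rather than $\vp$ leaves $50$ senators, $\vp$, and $\ge 218$ representatives, which passes by neither route, so $\prs$ is critical there too. This gives $c_v(k)\le c_p(k)$ at once. For the strict inequality on all of $270\le k\le 503$, I would note that for $488\le k\le 503$ we have $c_v(k)=0<c_p(k)$, while for $270\le k\le 487$ there is a type-P1 coalition of size $k$ with $51$ senators and no vice president, which is $\prs$-critical but not counted by $c_v(k)$.

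Part (c) is the substantive case. The obstacle is that $\sen$ is critical both in president coalitions (type S1) and in veto-override coalitions (types S2, S3), the latter containing no president, so no set containment into the $\prs$-critical coalitions is available. Instead I would build a size-preserving injection $\Phi$ from the coalitions counted by $c_s(k)$ to those counted by $c_p(k)$, defined by $\Phi(C)=C$ when $\prs\in C$ and $\Phi(C)=(C\setminus\{\sen\})\cup\{\prs\}$ when $\prs\notin C$. In the first case $C$ is of type S1, with exactly $51$ members of the full Senate, so $\prs$ is already critical; in the second case $C$ has $67$ senators, and after the swap one has $\prs$ with $66$ senators (and possibly $\vp$) and $290$--$435$ representatives, so deleting $\prs$ fails type II and $\prs$ is again critical. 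This map is injective, since $C$ is recovered either as itself or by deleting $\prs$ and reinstating $\sen$, and the two cases have disjoint images because the images of S1-coalitions contain $\sen$ whereas those of S2/S3-coalitions do not. This yields $c_s(k)\le c_p(k)$.

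Finally, for strictness in (c) I would exhibit, for each $k$ in $[270,503]$, a $\prs$-critical coalition omitted by $\Phi$: the image only ever uses Senate contributions of the two special sizes ($51$ in the full Senate, or $66$ senators), so a president coalition whose Senate size avoids these---for instance P1 with $51$ senators none equal to $\sen$ when $270\le k\le 487$, or P1 whose $66$ senators include $\sen$ for larger $k$---is never in the image, with the two topmost sizes settled by the direct comparison $\binom{100}{66}>\binom{99}{66}$. I expect the bookkeeping for this injection---checking that every image is genuinely $\prs$-critical, that the three senator-types map to disjoint families, and that some $\prs$-critical coalition is missed for every admissible $k$---to be the main difficulty; parts (a) and (b) are routine once Table~\ref{tab1} is in hand.
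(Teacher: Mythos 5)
Your proof is correct and follows essentially the same route as the paper: part (a) by inspecting the size ranges in Table~\ref{tab1}, part (b) via the containment $\cc_v(k)\sse\cc_p(k)$ plus an explicit witness for strictness, and part (c) via exactly the injection $S\mapsto S$ for type S1 and $S\mapsto(S\setminus\{\sen\})\cup\{\prs\}$ for types S2/S3, with non-surjectivity checked case by case and $k=503$ handled by the direct count $\binom{100}{66}>\binom{99}{66}$. The only differences are in the choice of witnesses for strictness (you exhibit coalitions whose Senate-size signature cannot occur in the image, while the paper perturbs an image coalition), which is immaterial.
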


\begin{proof}  (a) follows immediately from Table 1, the table of coalition sizes.  \vsp

Fix $k$ such that $c_p(k) \neq 0$, then $270 \leq k \leq 503$.    Let $\cc_p(k)$ denote the set of coalitions of size $k$ in which the
president is critical and $\cc_s(k)$ the set of coalitions of size $k$ in which a senator is critical.  \vsp

(b): Every coalition in which $\vp$ is critical contains $\prs$, and $\prs$ is also critical, thus $c_p(k) \geq c_v(k)$.   In addition, given $S \in \cc_v(k)$, the coalition
formed by removing $\vp$ and adding a senator not already in $S$ is in $\cc_p(k)$ and not in $\cc_v(k)$.  Hence $c_p(k) > c_v(k)$.
\vsp
 
(c): Define a function
$f : \cc_s(k) \to \cc_p(k)$ as follows:  Given $S$ in $\cc_s(k)$, if $S$ is type S1, then $\prs$ is critical in $S$ and we define $f(S) = S$.  If $S$ is type S2 or S3, then the coalition 
$S' =  (S \setminus \{ \sen \}) \cup \{ \prs \}$ is  in $\cc_p(k)$ since it contains only 66 senators, and we define 
$f(S) = S'$.  Then $f$ is clearly injective, hence $c_p(k) \geq c_s(k)$.  To show that the inequality is strict we need only show that
$f$ is not surjective.  \vsp

If $270 \leq k \leq 356$, then there are no type S2 or S3 coalitions in $\cc_s(k)$.  Thus any coalition in $\cc_p(k)$ that does not contain $\sen$ is not in Im $ f$, and there are clearly many
of these.  Now suppose $357 \leq k \leq 502$ and let $\tilde S$ be a coalition in $\text{Im } f$ of the form $(S \setminus \{\sen\}) \cup \{\prs\}$ with $S \in \cc_s(k)$ of type S2 or S3.  Then $\tilde S$ has exactly 66 senators and 434 or less representatives and we can construct
a new coalition in $\cc_p(k)$ by replacing any senator in $\tilde S$ by a representative not already in $\tilde S$.  This clearly yields a 
coalition in $\cc_p(k)$ that is not in $\text{Im } f$.  Hence $f$ is not surjective in this case.  \vsp

For $k = 503$, coalitions in $\cc_s(k)$ consist of $\sen$ plus $66$ other senators, 435 representative, and $\vp$; while those in $\cc_p(k)$ consist of $\prs$ plus $66$ senators, 435 representatives,
and $\vp$.  Then $c_p(503) = \binom{100}{66} > \binom{99}{66} = c_s(503)$.  Therefore in all cases we have $c_p(k) > c_s(k)$.

\end{proof}

\begin{prop} \label{sv} If $c_s(k) = 0$, then $c_v(k) = 0$.   If $c_s(k) \neq 0$, then  $270 \leq k \leq 503$ and we have
\begin{enumerate}[label=(\roman*)]

\item If $270 \leq k \leq 356$, then $c_s(k) = c_v(k)$.  

\item If $357 \leq k \leq 503$, then $c_s(k) > c_v(k)$.
\end{enumerate}

\end{prop}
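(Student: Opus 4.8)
The plan is to read everything off Table~\ref{tab1} and reduce the whole statement to counting the full-Senate part of the relevant coalitions. First I would record the ranges. The only coalitions in which $\vp$ is critical are of type V, so $c_v(k) \neq 0$ exactly when $270 \leq k \leq 487$; the coalitions in which $\sen$ is critical are of types S1, S2, S3, so $c_s(k) \neq 0$ exactly when $270 \leq k \leq 503$. Since the V-range is contained in the S-range, $c_v(k) \neq 0$ forces $c_s(k) \neq 0$, so contrapositively $c_s(k) = 0$ implies $c_v(k) = 0$; and $c_s(k) \neq 0$ implies $270 \leq k \leq 503$. This disposes of the first two assertions.

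The one piece of genuine content is the observation that, for every $k$, the number of type-V coalitions of size $k$ equals the number of type-S1 coalitions of size $k$. Both kinds consist of $\prs$, a $51$-element subset of the full Senate, and a set of representatives, where the full-Senate part is required to contain $\vp$ in the V case and $\sen$ in the S1 case. A $51$-subset of the $101$-member full Senate containing a prescribed element is determined by choosing the remaining $50$ members from the other $100$, so there are $\binom{100}{50}$ of each; in both cases the coalition then has $k - 52$ representatives, with $218 \leq k - 52 \leq 435$. Hence both counts equal $\binom{100}{50}\binom{435}{k - 52}$. Equivalently, relabeling $\vp$ as $\sen$ in the full-Senate part gives an explicit bijection between the two families, which is the conceptual reason the counts agree; I would state this as the single computational step.

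With this identity in hand both parts follow by comparing lists of coalition types. For part~(i), when $270 \leq k \leq 356$ the types S2 and S3 do not occur (their sizes are at least $357$), so $\cc_s(k)$ consists only of type-S1 coalitions and $c_s(k) = c_v(k)$. For part~(ii) I would split the range $357 \leq k \leq 503$ at $k = 487$: when $357 \leq k \leq 487$ the V- and S1-counts still agree, while $\cc_s(k)$ now also contains the type-S2 coalitions (present for $357 \leq k \leq 502$) and possibly type-S3 coalitions, so writing $c_s(k)$ as the sum of the S1, S2, and S3 counts gives $c_s(k) = c_v(k) + (\text{S2 count}) + (\text{S3 count})$ with the S2 count strictly positive, whence $c_s(k) > c_v(k)$; and when $488 \leq k \leq 503$ there are no type-V coalitions, so $c_v(k) = 0 < c_s(k)$.

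The arithmetic is entirely routine and there is no hard inequality to fight — the substance is the single counting identity $c_v(k) = (\text{S1 count})$, after which the two parts drop out. The only point requiring care is correctly accounting for the role of $\vp$ in the full-Senate count, namely that the $51$-element full-Senate part of a type-V coalition is just a $51$-subset constrained to contain $\vp$ rather than $\sen$, so that the V-coalitions and S1-coalitions are in natural bijection.
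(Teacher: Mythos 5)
Your proposal is correct and follows essentially the same route as the paper: both reduce everything to the identity $c_v(k) = \binom{100}{50}\binom{435}{k-52} = (\text{S1 count})$ via the bijection sending the distinguished full-Senate member $\vp$ to $\sen$, and then observe that for $k \geq 357$ the types S2/S3 contribute strictly positively to $c_s(k)$ while $c_v(k)$ vanishes for $k \geq 488$. No gaps.
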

 
\begin{proof}  The first statement follows immediately from Table 1.  Fix $k$ with $270 \leq k \leq 503$.  Coalitions in $\cc_v(k)$ consist of $\vp$ plus $50$ senators, $k-52$ representatives, and the president, hence
for $270 \leq k \leq 487$, 
$$c_v(k) = \binom{100}{50} \cdot \binom{435}{k - 52} .$$

(i)  Since $k  \leq 356$, coalitions in $\cc_s(k)$ are type S1 only, thus they 
consist of $\sen$ plus  $50$ others from the full Senate, $k - 52$ representatives, and $\prs$.  Hence
$$c_s(k) = 
\binom{100}{50} \cdot \binom{435}{k - 52} = c_v(k).$$

\noindent
(ii)  For $488 \leq k \leq 503$, $c_s(k) > 0$ and $c_v(k) = 0$, so this is clear.  For $357 \leq k \leq 487$,  we note that
in addition to the coalitions in $\cc_s(k)$ of type S1 above, there
are coalitions of type S2 or S3 and $\vp$ is never critical in these,  hence $c_k(s) >  c_v(k)$.

\end{proof}  

\vsp

\begin{prop}  \label{sr} If $c_s(k) = 0$, then $c_r(k) = 0$.  For all $k \in \nn$  such that $c_s(k) \neq 0$,  $c_s(k) > c_r(k)$.  
\end{prop}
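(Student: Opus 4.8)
The plan is to first dispatch the vanishing claim directly from Table~\ref{tab1}: a senator is critical exactly for $270 \le k \le 503$ (rows S1--S3), while a representative is critical only for $k \in \{270,\dots,320\}\cup\{357,\dots,391\}$ (rows R1--R3). Since the latter set is contained in the former, $c_s(k)=0$ forces $c_r(k)=0$. This also reduces the inequality to the range where $c_r(k)\neq 0$, because for every other $k$ with $c_s(k)\neq 0$ we trivially have $c_s(k)>0=c_r(k)$.

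For the inequality itself the key structural observation I would establish is that every critical coalition splits cleanly according to whether it contains the president. If $\sen$ (or $\rep$) is critical in a coalition containing $\prs$, then deleting the player must destroy the \emph{majority} pathway, and such a coalition cannot win via the supermajority pathway (its Senate count is too small); these are precisely the rows S1, resp.\ R1. If the player is critical in a coalition \emph{not} containing $\prs$, the coalition wins only via the supermajority pathway and the player is critical there, giving rows S2, S3, resp.\ R2, R3. Thus $\cc_s(k)$ and $\cc_r(k)$ partition by president-membership, so $c_s(k)=c_s^{\mathrm{maj}}(k)+c_s^{\mathrm{sup}}(k)$ and $c_r(k)=c_r^{\mathrm{maj}}(k)+c_r^{\mathrm{sup}}(k)$, and I can compare the two pathways separately.

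On the majority pathway every coalition contains $\prs$, which merely shifts all sizes by one; ignoring $\prs$ one is left with the bicameral game on the full Senate ($m_s=101$, $q_s=51$) and the House ($m_r=435$, $q_r=218$), both of odd size with simple-majority quotas. Theorem~\ref{bicam_thm}(i) then gives that a full-Senate member strictly dominates a House member, so $c_s^{\mathrm{maj}}(k)\ge c_r^{\mathrm{maj}}(k)$, strict wherever $c_s^{\mathrm{maj}}(k)\neq 0$. On the supermajority pathway $\prs$ is absent and $\vp$ is a null player (it never affects passage), so writing $\hat c_s,\hat c_r$ for the critical numbers of the bicameral game on the Senate ($m_s=100$, $q_s=67$) and House ($m_r=435$, $q_r=290$) one gets $c_\bullet^{\mathrm{sup}}(k)=\hat c_\bullet(k)+\hat c_\bullet(k-1)$. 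Although these quotas are not simple majorities, the proof of Proposition~\ref{cor1} uses only $q_s m_r>q_r m_s$ and $m_r-q_r\ge m_s-q_s$, both of which hold here ($67\cdot 435>290\cdot 100$ and $145\ge 33$); hence $\hat c_s(j)\ge\hat c_r(j)$ for all $j$, strict where $\hat c_s(j)\neq 0$. Summing two consecutive values preserves this, so $c_s^{\mathrm{sup}}(k)\ge c_r^{\mathrm{sup}}(k)$, strict wherever $c_s^{\mathrm{sup}}(k)\neq 0$.

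Finally I would assemble the ranges. For $270\le k\le 320$ only the majority pathway contributes and $c_s^{\mathrm{maj}}(k)>c_r^{\mathrm{maj}}(k)$; for $357\le k\le 391$ the representative has no majority-pathway coalitions (R1 stops at $320$) while $c_s^{\mathrm{maj}}(k)>0$, and $c_s^{\mathrm{sup}}(k)\ge c_r^{\mathrm{sup}}(k)$, so $c_s(k)>c_r(k)$; and for the remaining $k$ in $[270,503]$ one has $c_r(k)=0<c_s(k)$. I expect the main obstacle to be the second step: proving rigorously that president-containing and president-free critical coalitions partition $\cc_s(k)$ and $\cc_r(k)$ as claimed (in particular that a supermajority-winning coalition containing $\prs$ never renders the player critical), together with confirming that Proposition~\ref{cor1} genuinely applies to the non-simple-majority supermajority quotas and that treating $\vp$ as a null player introduces no boundary discrepancy at $k=357$.
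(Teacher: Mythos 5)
Your proof is correct and follows essentially the same route as the paper: both dispose of the vanishing claim via Table~\ref{tab1} and both reduce the strict inequality to Proposition~\ref{main}(d), applied with parameters $(m_s,m_r,q_s,q_r)=(101,435,51,218)$ for the president-containing (majority-pathway) coalitions and $(100,435,67,290)$ for the supermajority ones. Your repackaging through Theorem~\ref{bicam_thm}(i) and the proof of Proposition~\ref{cor1} is legitimate since you verify directly that $q_s m_r > q_r m_s$ and $m_r-q_r\ge m_s-q_s$ hold for the non-majority quotas, and your ``sum of two consecutive values'' step correctly replaces the paper's second invocation of Proposition~\ref{main}(d) with parameters $(100,435,66,289)$.
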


\begin{proof}  The first statement follows immediately from Table 1. 
If $321 \leq k \leq 356$ or $392 \leq k \leq 503$, then $c_r(k) = 0$ and $c_s(k) \neq 0$, so there is nothing to prove.  We break the remaining values of $k$ into
three cases:  $270  \leq k \leq 320$, $k = 357$, and $358 \leq k \leq 391$. \\

\noindent
{\bf Case 1.}  For $270 \leq k \leq 320$,the coalitions in $\cc_s(k)$ are of type S1 and
thus  consist of $\sen$ plus $50$ others from the full Senate, $k - 52$
representatives and the president.  
Coalitions in $\cc_r(k)$ are of type R1 and hence consist of 
$\rep$ plus $217$ other representatives, the president, and $k - 219$ from the full Senate.   It follows that

$$c_s(k) = \binom{100}{50} \cdot \binom{435}{k - 52}, \ts \ts
c_r(k) =\binom{101}{k - 219} \cdot \binom{434}{217}.$$

We apply Proposition \ref{main} (d)  with $m_s = 101$, $m_r = 435$, $q_s = 51$, and $q_r = 218$.  The
conditions  $q_s m_r  > q_r m_s$ and $ (m_r-q_r)(q_s + 1) > (m_s -q_s)(q_r + 1)$  are easily checked.   Hence, by the proposition, $c_s(k) > c_r(k)$ for all $k$.  \vsp  

{\bf Case 2.}  $k = 357$.  Coalitions in $\cc_s(357)$ consist of $\sen$, $66$ other senators, and $290$ representatives, while
coalitions in $\cc_r(357)$ consist of $\rep$, $289$ other representatives, and $67$ senators.  Then
$$c_s(357) = \binom{99}{66} \cdot \binom{435}{290}  > \binom{100}{67} \cdot \binom{434}{289} = c_r(357).$$ \\

\noindent
{\bf Case 3.}  For $358 \leq k \leq 391$, the coalitions in $\cc_s(k)$ consist of $\sen$, $66$ other senators and either $k - 67$
representatives, or $\vp$ and $k - 68$ representatives.  The coalitions in $\cc_r(k)$ consist of  $\rep$, $289$ other
representatives, and either $k - 290$ senators or $\vp$ and $k - 291$ senators.  Thus

\begin{equation} \label{ineq2}
\begin{split} c_s(k) &= \binom{99}{66} \cdot \binom{435}{k - 67}  + \binom{99}{66} \cdot \binom{435}{k - 68}  \\
c_r(k) &= \binom{100}{k - 290} \cdot  \binom{434}{289} + \binom{100}{k - 291} \cdot \binom{434}{289}.    \end{split}  \end{equation}

By Proposition \ref{main}(d)   with $m_s = 100, m_r = 435, q_s = 67$, and $q_r = 290$, $$\binom{99}{66} \cdot \binom{435}{k - 67} >
\binom{100}{k - 290} \cdot \binom{434}{289},$$ and  with $m_s = 100, m_r = 435, q_s = 66$, and $q_r = 289$,
$$\binom{99}{66} \cdot \binom{435}{k - 68} >
\binom{100}{k - 291} \cdot \binom{434}{289}.$$  Therefore $c_s(k) > c_r(k)$.  

\end{proof}

\vsp

Finally, we compare the numbers $c_v(k)$ and $c_r(k)$.   Apart from a narrow range of $k$'s, $c_v(k)$ is the larger of the two.  \vsp

\begin{prop}  \label{rv}  (a)  If $c_v(k) = 0$, then $c_r(k) = 0$.

(b)  Suppose $k \in \nn$ such that $c_v(k) \neq 0$, so that $270 \leq k \leq 487$.  
If $270 \leq k \leq 356$ or $380 \leq k \leq 487$,  $c_v(k) > c_r(k)$.  For the remaining $k$, i.e., $357 \leq k \leq 379$,
$c_r(k) > c_v(k)$.
\end{prop}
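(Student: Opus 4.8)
The plan is to reduce the whole statement to the explicit product formulas for $c_v(k)$ and $c_r(k)$ already isolated in the proofs of Propositions \ref{sv} and \ref{sr}: namely $c_v(k)=\binom{100}{50}\binom{435}{k-52}$ on $270\le k\le 487$, while $c_r(k)=\binom{101}{k-219}\binom{434}{217}$ on $[270,320]$, $c_r(357)=\binom{100}{67}\binom{434}{289}$, and, collapsing the two summands of \eqref{ineq2} by Pascal's rule, $c_r(k)=\binom{101}{k-290}\binom{434}{289}$ on $[358,391]$, with $c_r(k)=0$ on $[321,356]\cup[392,487]$. Part (a) is then immediate from Table \ref{tab1}, since the support of $c_r$ is contained in $[270,487]$. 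For (b) I would first dispose of the easy sub-ranges: on $[321,356]\cup[392,487]$ we have $c_r(k)=0<c_v(k)$; and on $[270,320]$, Proposition \ref{sv}(i) gives $c_v(k)=c_s(k)$ while Proposition \ref{sr} gives $c_s(k)>c_r(k)$, so $c_v(k)>c_r(k)$ there. This leaves the decisive window $357\le k\le 391$.

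On this window I would work with the ratio $R(k)=c_r(k)/c_v(k)$. The endpoint $k=357$ uses the separate type-R2 formula, so I would simply verify $c_r(357)>c_v(357)$ by hand. For $358\le k\le 390$ the single-product formulas give the clean recurrences
$$\frac{c_v(k+1)}{c_v(k)}=\frac{487-k}{k-51},\qquad \frac{c_r(k+1)}{c_r(k)}=\frac{391-k}{k-289},$$
whence $R(k+1)/R(k)=\dfrac{(391-k)(k-51)}{(k-289)(487-k)}$. This quotient exceeds $1$ exactly when $(391-k)(k-51)>(k-289)(487-k)$, which collapses to the linear inequality $334\,k<120802$, i.e. $k\le 361$. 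Hence $R$ is strictly increasing on $[358,362]$ and strictly decreasing on $[362,391]$, so it is unimodal and the set $\{k\in[358,391]:R(k)>1\}$ is an interval.

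It then remains to pin that interval down to $[358,379]$ by boundary checks. Because $R$ rises to its peak at $362$ and falls afterward, its minimum over any subinterval is attained at an endpoint; thus verifying $R(358)>1$ and $R(379)>1$ forces $R>1$ on all of $[358,379]$, while $R(380)<1$ together with the monotone decrease on $[362,391]$ forces $R<1$ on $[380,391]$. Combined with $c_r(357)>c_v(357)$, this gives $c_r(k)>c_v(k)$ for $357\le k\le 379$ and $c_v(k)>c_r(k)$ for $380\le k\le 391$, which is exactly part (b).

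The hard part is the four explicit comparisons $c_r(357)>c_v(357)$, $c_r(358)>c_v(358)$, $c_r(379)>c_v(379)$, and $c_v(380)>c_r(380)$. Each sets a product of two binomial coefficients with top indices in the hundreds against another, so no small closed form is available; I would evaluate the relevant value of $R(k)$ either by telescoping it into a product of linear factors or by a careful Stirling estimate, and check that it straddles $1$ at the stated crossover. The unimodality above reduces the roughly $35$ separate comparisons to just these four, but confirming that the sign change falls precisely between $379$ and $380$, rather than one step earlier or later, is the genuinely computational heart of the argument.
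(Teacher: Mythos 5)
Your proposal is correct, and on the decisive window it takes a genuinely different route from the paper. The paper's proof handles $270 \le k \le 356$ exactly as you do (via $c_v(k)=c_s(k)>c_r(k)$ and the vanishing of $c_r$), checks $k=357$ and $k=391$ by a single explicit comparison each, and then for $358 \le k \le 390$ simply reports that Mathematica verifies $\binom{434}{289}\binom{101}{k-290} > \binom{100}{50}\binom{435}{k-52}$ exactly for $358 \le k \le 379$ --- i.e.\ it is a brute-force check of each $k$ in the window. You instead prove structure: the ratio $R(k)=c_r(k)/c_v(k)$ satisfies $R(k+1)/R(k)=\frac{(391-k)(k-51)}{(k-289)(487-k)}$, which exceeds $1$ precisely when $334k<120802$, so $R$ is unimodal with peak at $k=362$ and the set where $R>1$ is an interval; this correctly reduces the roughly thirty-three comparisons to four boundary checks ($k=357$, $358$, $379$, $380$). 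Your approach buys an explanation of \emph{why} the crossover set is a single interval and localizes the computation to the two endpoints where the sign actually changes; the paper's approach buys brevity at the cost of an unexplained computer verification. The only caveat is that your four residual comparisons are still not closed-form --- each pits products of binomial coefficients with top indices $434$ and $435$ against one another, and a crude Stirling bound is unlikely to resolve $R(379)>1>R(380)$ since $R$ passes very near $1$ there --- so some exact numerical evaluation (e.g.\ telescoping $R(380)/R(358)$ into a product of the linear factors above and computing $R(358)$ exactly) is still required; those four checks do hold, so your argument goes through.
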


\begin {proof}  (a)  This follows from Table 1.

(b)   Recall that coalitions in $C_v(k)$ consist of $\vp$, $50$ senators,
the president, and $k - 52$ representatives.  Thus, for all such values of $k$, we have
$c_v(k) = \binom{100}{50} \binom{435}{k - 52} $.    \vsp

Case 1:  For $270 \leq k \leq 356$, $c_v(k) = c_s(k) > c_r(k)$, by Proposition \ref{sv} and Proposition \ref{sr}.  

Case 2:  For $k = 357$, the only coalitions in which $\rep$ is critical are of type R2 and
consist of $\rep$ plus 289 other representatives and 67 senators.  Hence we have
$$c_v(357) = \binom{100}{50} \binom{435}{305} <  \binom{434}{289} \binom{100}{67} = c_r(357).$$  

Case 3:  For $358 \leq k \leq 390$,  coalitions in which $\rep$ is critical are of type R3  and they consist of $\rep$ plus
$289$ other representatives and $k - 290$ members of the full Senate.   Thus we have

\begin{equation} \label{rep}
c_r(k) = \binom{434}{289} \binom{101}{k - 290} 
\end{equation} for these $k$.    

Using the computer algebra software Mathematica, we find that $$\binom{434}{289} \binom{101}{k - 290}  >
\binom{100}{50} \binom{435}{k - 52}$$  iff $358 \leq k \leq 379$.

Case 4: For $k = 391$ we have $$c_v(391) = \binom{100}{50} \binom{435}{239} > \binom{434}{289} = c_r(391),$$   as claimed.

Case 5:  For $392 \leq k \leq 487$, $c_r(k) = 0$ and $c_v(k) > 0$, so $c_v(k) > c_r(k)$.   

\end{proof}

\vsp

\begin{theorem} In the simple game modeling the US legislative system, the weak desirability relation yields 
$\prs \succ \sen \succ \rep$ and $\sen \succeq \vp$.  It follows that

 \begin{enumerate} \item For any power index $\Phi$ for which $\Phi(p) \neq 0$ and $\Phi(s) \neq 0$, we have $$\prs >_\Phi \sen >_\Phi \rep
 \text{ and } \sen \geq_\Phi \vp.$$

\item If $\Phi$ is the  Banzhaf or  Shapley-Shubik index, we have $\prs >_\Phi \sen >_\Phi \vp >_\Phi \rep$.
\end{enumerate}

\end{theorem}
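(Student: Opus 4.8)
The plan is to first establish the weak-desirability relations $\prs \succ \sen$, $\sen \succ \rep$, and $\sen \succeq \vp$ from the comparisons in Propositions~\ref{p}, \ref{sr}, and \ref{sv}, and then read off the rankings via Proposition~\ref{compare}. For $\prs \succ \sen$: if $c_p(k) \neq 0$ then $c_p(k) > c_s(k)$ by Proposition~\ref{p}(c), while if $c_p(k) = 0$ then $c_s(k) = 0$ by Proposition~\ref{p}(a), so $c_p(k) > c_s(k)$ whenever the two are not both zero. The relation $\sen \succ \rep$ follows in the same way from Proposition~\ref{sr}, and $\sen \succeq \vp$ from Proposition~\ref{sv}, which gives $c_s(k) = c_v(k)$ for $270 \leq k \leq 356$, $c_s(k) > c_v(k)$ for $357 \leq k \leq 503$, and $c_v(k) = 0$ whenever $c_s(k) = 0$.

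For statement (a), the weak inequality $\sen \geq_\Phi \vp$ is immediate from $\sen \succeq \vp$ and Proposition~\ref{compare}(1). To obtain the strict inequalities I would use termwise domination: the difference $\Phi(p) - \Phi(s) = \sum_k \la_k \bigl(c_p(k) - c_s(k)\bigr)$ has all summands nonnegative, since $\la_k \geq 0$ and $c_p(k) \geq c_s(k)$; because $\Phi(p) \neq 0$, some $k$ has $\la_k > 0$ and $c_p(k) > 0$, and for that $k$ the values are not both zero, so $c_p(k) > c_s(k)$ makes the summand strictly positive and $\prs >_\Phi \sen$. The same argument with $\Phi(s) \neq 0$ yields $\sen >_\Phi \rep$.

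For statement (b) I would first note that the Banzhaf and Shapley-Shubik weighting vectors have $\la_k > 0$ for every $k$, so each player with a swing has positive power; in particular $\Phi(p), \Phi(s) \neq 0$, and (a) gives $\prs >_\Phi \sen >_\Phi \rep$ together with $\sen \geq_\Phi \vp$. Here $\sen \geq_\Phi \vp$ sharpens to $\sen >_\Phi \vp$ because $c_s(k) > c_v(k)$ on the nonempty range $357 \leq k \leq 503$ where $\la_k > 0$. The remaining inequality $\vp >_\Phi \rep$ is the crux and the main obstacle: by Proposition~\ref{rv} the difference $c_v(k) - c_r(k)$ is positive for $270 \leq k \leq 356$ and $380 \leq k \leq 487$ but negative on the window $357 \leq k \leq 379$, so no desirability relation holds and Proposition~\ref{compare}(1) does not apply. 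Instead I would evaluate $\Phi(v) - \Phi(r) = \sum_k \la_k \bigl(c_v(k) - c_r(k)\bigr)$ directly and show that the positive contributions over the two wide ranges dominate the negative ones over the narrow window. For the Banzhaf index the constant weight $\la_k = 1/2^{n-1}$ collapses this to the single comparison $\sum_k c_v(k) > \sum_k c_r(k)$ of total swing counts, expressible as an inequality of binomial sums through the explicit formulas for $c_v$ and $c_r$ in Propositions~\ref{sv} and \ref{sr}; for Shapley-Shubik, where $\la_k = 1/\bigl(n\binom{n-1}{k-1}\bigr)$, the weighted sum must be checked numerically, as was done for Proposition~\ref{rv}. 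This sign change is exactly what makes the vice-president-versus-representative comparison index-dependent in general---Proposition~\ref{compare}(2) even guarantees the existence of indices reversing it---so the strict inequality $\vp >_\Phi \rep$ is genuinely special to these two indices and requires quantitative estimation rather than the monotone argument used for the other pairs.
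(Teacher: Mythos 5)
Your proposal is correct and follows essentially the same route as the paper: derive $\prs \succ \sen \succ \rep$ and $\sen \succeq \vp$ from Propositions~\ref{p}, \ref{sr}, and \ref{sv}, apply Proposition~\ref{compare} for part (a), and reduce part (b) to the single index-dependent comparison $\vp >_\Phi \rep$, which must be verified by direct numerical evaluation of the weighted binomial sums exactly as the paper does with Mathematica. If anything you are slightly more careful than the paper, since you spell out why the inequalities in (a) are strict under the hypotheses $\Phi(p)\neq 0$, $\Phi(s)\neq 0$, and why $\sen \geq_\Phi \vp$ sharpens to $\sen >_\Phi \vp$ for Banzhaf and Shapley-Shubik.
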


\begin{proof}  Propositions \ref{p}, \ref{sv}, and \ref{sr}  imply that $\prs \succ \sen \succ \rep$ and $\sen \succeq \vp$ and (a) follows from this by Proposition 1.   

(b):  By (a), we 
need only show that $\vp >_\Phi \rep$ if $\Phi$ is the Banzhaf or Shapley-Shubik index.      Recall that $c_v(k) \neq 0$ for $270 \leq k \leq 487$ and
for these $k$, $c_v(k) = \binom{100}{50} \binom{435}{k - 52}$, while $c_r(k) \neq 0$ for $270 \leq k \leq 391$.

Suppose $\Phi$ is Banzhaf power, then $$\Phi(\vp) = \frac{1}{2^{536}}  \sum_{k = 270}^{487} \binom{100}{50} \binom{435}{k - 52}.$$  For $\rep$ we must add up the contributions from the three types
of critical instances.  For type R1, $c_r(k) = \binom{434}{217} \binom{101}{k - 219}$, type R2  corresponds to $c_r(357) = \binom{434}{289} \binom{100}{67}$, and
for type R3, $c_r(k) = \binom{434}{289} \binom{101}{k - 290}$.   Thus 
$$\Phi(\rep)  = \frac{1}{2^{536}} \left( \sum_{k = 270}^{320} \binom{434}{217} \binom{101}{k - 219} +\binom{434}{289} \binom{100}{67} 
+ \sum_{270}^{487}  \binom{434}{289} \binom{101}{k - 290} \right).$$   It is easy to check that $\Phi(\vp) > \Phi(\rep)$ using Mathematica.   

If $\Phi$ is Shapley-Shubik  power then the calculation of $\Phi(\vp)$ and $\Phi(\rep)$ is as for Banzhaf power except that we must multiply each $c_r(k)$ and $c_v(k)$
by the weight $\la_k = 1/\left( n \binom{n-1}{k-1}\right)$.  Thus we need only use Mathematica to verify that
\begin{align*} & \sum_{k = 270}^{487}  \la_k \binom{100}{50} \binom{435}{k - 52}  > \\
&\sum_{k = 270}^{320} \la_k \binom{434}{217} \binom{101}{k - 219} +  \la_{357} \binom{434}{289} \binom{100}{67} 
+ \sum_{270}^{487} \la_k  \binom{434}{289} \binom{101}{k - 290}. \end{align*}
\end{proof}  

\vsp

\begin{remark}  The proofs of Propositions \ref{p}, \ref{sv},   and \ref{sr}  did not depend on the specific numbers of representatives 
and senators in the US system and the quotas in the sense that as long as the assumptions of Lemma \ref{lem1} and of
Proposition \ref{main} hold for $m_s, m_r, q_s$, and $q_r$, then the conclusions of these propositions hold.  
However, comparing the ranking of a representative and the vice-president using 
Proposition \ref{rv} involves the specific numbers in the US system and thus does not generalize immediately.   \vsp

\end{remark}

\subsection{Supermajority Rules} 

For some bills in the US Senate a supermajority of 60 or more
senators are required to vote yes in order to break a Filibuster.  In this case, we still have $\sen \succ \rep$ for $\sen$ a senator and
$\rep$ a representative  In fact, $S \succ R$ will hold regardless
of the number of votes needed to pass a bill in the Senate, as long as this number is greater than $50$.  \vsp

\begin{prop}   Suppose $q_s$ votes are needed to pass legislation in the Senate, where $51 \leq q_s \leq 100$, and
everything else remains the same.  Then we have $P \succ S \succ R$ as before.   
\end{prop}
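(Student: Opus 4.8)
The plan is to reduce the proposition to its two strict dominances, $\prs \succ \sen$ and $\sen \succ \rep$, and to exploit the observation recorded in the Remark above: the proofs of Propositions~\ref{p} and \ref{sr} do not use the numerical value of the Senate quota beyond the points where they invoke Lemma~\ref{lem1} and Proposition~\ref{main}. So I would keep the House quota $q_r = 218$ for the presidential pathway, the override thresholds $(67,290)$, the four player counts, and the tie-breaking role of the vice president all fixed, and only propagate the effect of replacing the Senate simple-majority quota $51$ by an arbitrary $q_s$ with $51 \le q_s \le 100$ through those two proofs.

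For $\sen \succ \rep$ I would follow the case split of Proposition~\ref{sr}. The override-pathway cases use only the thresholds $(67,290)$, which are unchanged, so they transfer verbatim. In the presidential-pathway range the critical numbers become $c_s(k) = \binom{100}{q_s-1}\binom{435}{k-q_s-1}$ and $c_r(k) = \binom{434}{217}\binom{101}{k-219}$, and the inequality $c_s(k) > c_r(k)$ reduces, through Proposition~\ref{main}(d) with $m_s = 101$, $m_r = 435$, Senate quota $q_s$, and $q_r = 218$, to the two numerical conditions $q_s\,m_r > q_r\,m_s$ and $(m_r-q_r)(q_s+1) > (m_s-q_s)(q_r+1)$. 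Both hold at $q_s = 51$ (this is exactly Case~1 of Proposition~\ref{sr}), and since raising $q_s$ strictly increases each left-hand side while weakly decreasing each right-hand side, they persist throughout the range; this monotonicity is the device that replaces the single-quota check. The identification of $c_s(k)$ with the displayed product is valid for $51 \le q_s \le 66$, where no presidential-pathway coalition can reach $67$ senators, and the $c_r$ formula (with its exactly $218$ representatives) is valid for every $q_s$.

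For $\prs \succ \sen$ I would re-run the injection $f:\cc_s(k)\to\cc_p(k)$ of Proposition~\ref{p}(c), which fixes presidential-pathway coalitions (where $\prs$ is already critical), swaps the critical senator for $\prs$ on override coalitions, and then misses coalitions of $\cc_p(k)$. For $51\le q_s\le 66$ this goes through essentially verbatim, since the only changes are the number of full-Senate members in a presidential-pathway coalition and the resulting shifts of the size ranges in Table~\ref{tab1}.

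The step I expect to be the real obstacle is the regime $67 \le q_s \le 100$, and the difficulty is bookkeeping rather than any new inequality. Once the presidential pathway requires at least as many senators as the override, a coalition containing $\prs$, at least $67$ senators, and at least $290$ representatives also wins through the override, so deleting $\prs$ (or deleting $\sen$) need not destroy the winning coalition. Consequently both the clean product formula for $c_s(k)$ and the claim that $\prs$ is critical in every presidential-pathway coalition fail in this overlap region, and $c_p(k)$ and $c_s(k)$ are reduced there simultaneously. I would therefore re-examine precisely which coalitions remain senator-critical and president-critical in the overlap and confirm that the injection stays valid and non-surjective and that the presidential-pathway comparison survives; this is the one place where the argument requires genuine care rather than the monotonicity and transfer used everywhere else.
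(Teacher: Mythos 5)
Your route is the same as the paper's: split the claim into $\prs \succ \sen$ and $\sen \succ \rep$, observe that the earlier arguments depend on the quota only through the hypotheses of Lemma~\ref{lem1} and Proposition~\ref{main}(d), and re-verify the two numerical conditions $q_s m_r > q_r m_s$ and $(m_r-q_r)(q_s+1) > (m_s-q_s)(q_r+1)$ for general $q_s$. The paper checks these directly (they hold iff $q_s \geq 51$, resp.\ $q_s \geq 50$), whereas you check them at $q_s=51$ and propagate by monotonicity in $q_s$; that difference is cosmetic. For $51 \le q_s \le 66$ your argument is complete and matches the published proof.

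The genuine issue is the regime $67 \le q_s \le 100$, which you correctly flag but do not close, so as written your proposal does not prove the statement over its full stated range. You should know, however, that the paper's own proof has exactly the same hole --- it disposes of it with ``the proof that $P \succ S$ generalizes immediately'' --- and your suspicion that the overlap between the presidential pathway and the override is not mere bookkeeping is right. Once $q_s \ge 68$, any coalition containing $\prs$ with at least $q_s$ members of the full Senate has at least $67$ senators, so $\prs$ is critical only when the coalition has at most $289$ representatives; this collapses $c_p(k)$ near $k = 357$. Meanwhile $c_s(357)$ still contains all override-type coalitions, contributing $\binom{99}{66}\binom{435}{290}$. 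At $q_s = 100$, for instance, $c_p(357) = 101\binom{435}{256} + \binom{435}{255}$, which is smaller than $\binom{99}{66}\binom{435}{290}$ by many orders of magnitude, so $c_s(357) > c_p(357)$ and $\prs \succ \sen$ actually fails. So the step you identified as ``requiring genuine care'' cannot be repaired: the proposition as stated is false for $q_s$ near $100$, and both your proposal and the paper's proof are only valid in the range where the presidential pathway requires fewer senators than the override, i.e.\ roughly $51 \le q_s \le 66$ (or $67$, with extra checking).
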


\begin{proof}   The proof that $P \succ S$ generalizes immediately.   To show that $S \succ R$ we first note 
that  the conditions of Lemma \ref{lem1} are still satisfied and thus we have $c_s(k) = 0$ implies $c_r(0) = 0$.  
We can then apply  Proposition \ref{main} (d) from the appendix as in the proof of Proposition \ref{sr}.   The conditions needed are
(1) $q_s m_r  > q_r m_s$ and (2) $ (m_r-q_r)(q_s + 1) > (m_s -q_s)(q_r + 1) $.  In this case, we have $m_s = 100$, $m_r = 435$, and 
$m_r = 218$.  For (1) we have  $(a)(435) > (218)(100)$ iff $a \geq 51$, and for (2) we have
$(435 - 218)(a + 1) > (100 - a)(219)$, which holds iff $a \geq 50$.    This proves $S \succ R$.     \vsp

The proof that the $P \succ S$ does not depend on the number of senators needed to pass legislation when the
president votes yes, hence $P \succ S$ still holds in the this case.
\end{proof}   \vsp

Suppose that the House of Representatives decided to raise the quota for passing bills in order to insure that members
have more power than senators.
Assume that only $51$ votes are needed in the Senate.  Using Table 1, we
see that for winning coalitions containing
the president, the range of $k$ in which $c_s(k) \neq 0$ is $270 \leq k \leq 503$, thus in order to insure that there are 
no $k$ such that $c_s(k) \neq 0$ and $c_r(k) = 0$, there must be coalitions in which a representative is critical
with sizes up to $503$.  It follows that the minimum number of representatives 
needed to pass a bill in the House would have to be at least $401$ ($503 - 102$).   If $q_r = 401$ and $q_s$ remains $51$, then
$(401)(101) > (51)(435)$, so for $k = q_r + q_s + 1$ (minimal winning coalitions), by Lemma \ref{mincase}, $c_r(k) > c_s(k)$.
For the largest possible coalitions for which $c_s(k) \neq 0$,  i.e., $k = 503$, we have
$$c_r(503) = \binom{434}{400} \binom{101}{101}  > \binom{100}{50} \binom{435}{435} = c_s(503).$$
Since $c_r(k) > c_s(k)$ for $k$ the smallest and largest possible values, by Corollary \ref{cor2}, we have $R \succ S$
in this case.   Hence it is possible for the House to adopt a supermajority rule which would give their members
more power than members of the Senate.  

\vsp

\begin{remark}  The above discussion shows that in a bicameral legislative system members of a chamber can
increase their power by adopting supermajority a higher quota for passage of bills.
\end{remark}

\vsp
\section{Concluding Remarks}

We have investigated rankings induced by power indices of players and the weakly desirable relation in simple game models of 
bicameral legislative
systems.   In three cases (depending on the relative parity of the sizes of the houses), a member of the smaller house is ranked above a member of the larger house, regardless of
the power index used.   We showed that a sufficient condition for the members of the smaller house to always have more power than
the members of the larger house is that $q_s/m_s > q_r/m_r$ , where $m_s < m_r$ are the sizes of the houses and $q_r, q_s$ are the thresholds needed
to pass a bill, i.e., 
 the proportion of the smaller house needed to pass a bill is greater than the
proportion of the larger house needed to pass a bill.

In the fourth (exceptional case), where the size $m_s$ of the smaller house is odd and the size $m_r$ of the bigger house is even, if
$m_r \leq 2m_s$, then the condition $q_s/m_s > q_r/m_r$ fails.    When the gap is as small as possible, i.e., $m_r = m_s + 1$, then members of the larger house will
be ranked above members of the smaller house by all power indices.  For the largest possible gap, i.e., $m_r = 2m_s$, 
the larger house has an advantage only for minimal winning coalitions.   All of these results generalize to multicameral
legislatures.

Our main application of these results is to a standard simple game model of the US legislative system.   We showed that the
president always has the most power, a senator always has more power than a representative, and that a representative
has more power than the vice-president for most power indices, including the Banzhaf and Shapley-Shubik indices.   Most
of these results apply to similar legislative systems.  

\section{Appendix:  Products of Binomial Coefficients}

We prove the technical results on products of binomial coefficients that are needed to compare the numbers  $c_i(k)$ for different
players in our simple game models.  
Here are some basic facts about
binomial coefficients: 

$$\binom{n}{k} = \frac{n!}{k! (n - k)!}, \quad  \binom{n}{k} + \binom{n}{k+1} = \binom{n+1}{k+1}, \quad \binom{n}{k+1}/ \binom{n}{k} = \frac{n - k}{k+1} 
$$

Recall that in order to prove that $c_s(k) > c_r(k)$, we must prove that inequality \eqref{ineq1} holds for $k$.   For the convenience of the reader, 
the inequality is given again:

$$\binom{m_s - 1}{q_s - 1} \cdot \binom{m_r}{k - q_s} >   \binom{m_r-1}{q_r-1} \cdot  \binom{m_s}{k - q_r}  \quad \quad \quad \quad \quad (1) $$ \vsp

This holds iff
 
\begin{equation} \label{ineq2}  \binom{m_r}{k - q_s}/\binom{m_r - 1}{q_r -1}  > \binom{m_s}{k - q_r} / \binom{m_s - 1}{q_s - 1} .  
\end{equation} 

\vsp

Define the following
functions:  For positive integers $u < p$ and an integer $i$ such that $0 \leq i < p - u$, define 
$$f(p, u, i) := \binom{p}{u + i}/\binom{p-1}{u - 1},$$
$$g(p,u,i) := \frac{ p - u - i}{u + i + 1}.$$   

Then inequality \eqref{ineq1} holds iff inequality \eqref{ineq2} holds iff $$f(m_r,q_r,k - q_r - q_s) > f(m_s,q_s, k - q_r - q_s).$$  \vsp

\begin{lemma}\label{fg}  With $f$ and $g$ as above, 
 $$f(p,u,i+1) = g(p,u,i) \cdot f(p,u, i)$$    for all $i$  with $0 \leq i < p - u$,\end{lemma}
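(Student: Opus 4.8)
The plan is to reduce the claimed identity to the elementary ratio formula $\binom{n}{k+1}/\binom{n}{k} = (n-k)/(k+1)$ recorded at the start of the appendix. The key observation is that the denominator $\binom{p-1}{u-1}$ appearing in the definition of $f(p,u,i)$ does not depend on the index $i$. Hence, writing out both sides of the asserted equation
$$ f(p,u,i+1) = \frac{\binom{p}{u+i+1}}{\binom{p-1}{u-1}}, \qquad g(p,u,i)\,f(p,u,i) = \frac{p-u-i}{u+i+1}\cdot\frac{\binom{p}{u+i}}{\binom{p-1}{u-1}}, $$
the common factor $1/\binom{p-1}{u-1}$ cancels, and the lemma becomes equivalent to the single identity
$$ \binom{p}{u+i+1} = \frac{p-u-i}{u+i+1}\,\binom{p}{u+i}. $$

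First I would note that for $i$ in the stated range $0 \le i < p-u$ we have $u \le u+i < p$, so $\binom{p}{u+i}$ is a nonzero binomial coefficient and the top index satisfies $u+i+1 \le p$, making $\binom{p}{u+i+1}$ well-defined. Then I would apply the ratio formula $\binom{n}{k+1}/\binom{n}{k} = (n-k)/(k+1)$ with $n = p$ and $k = u+i$, which gives exactly
$$ \frac{\binom{p}{u+i+1}}{\binom{p}{u+i}} = \frac{p-(u+i)}{(u+i)+1} = \frac{p-u-i}{u+i+1} = g(p,u,i), $$
and rearranging this yields the displayed identity, completing the argument.

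There is no substantive obstacle here; the only point requiring a moment of care is bookkeeping at the top of the range. When $i = p-u-1$ the quantity $f(p,u,i+1) = f(p,u,p-u)$ sits at the boundary of the domain in the definition of $f$, but the expression $\binom{p}{p}/\binom{p-1}{u-1} = 1/\binom{p-1}{u-1}$ is still perfectly meaningful, and the ratio formula continues to hold there since $\binom{p}{p-1} = p$. I would simply remark that the identity is read using this well-defined value, so that the recursion $f(p,u,i+1) = g(p,u,i)\,f(p,u,i)$ holds throughout $0 \le i < p-u$.
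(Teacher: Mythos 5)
Your proof is correct and follows essentially the same route as the paper: both cancel the $i$-independent denominator $\binom{p-1}{u-1}$ and reduce the identity to the ratio formula $\binom{p}{u+i+1}/\binom{p}{u+i}=(p-u-i)/(u+i+1)$. Your extra care about the boundary value $i=p-u-1$ is a harmless refinement the paper omits.
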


\begin{proof}
Using basic properties of  binomial coefficients, we have
$$f(p,u, i+ 1)/f(p,u,i)  =  \binom{p}{u + i + 1}/ \binom{p}{u+i} =  \frac{p  -u -  i}{u + i + 1},$$
which proves the claim.  
 
  \end{proof}    \vsp 
  
  \begin{lemma} \label{mincase}  Let $m_r, m_s, q_r, q_s \in \nn$ such that $1 < q_r < m_r$ and $1 < q_s < m_s$.  Then
   \eqref{ineq1} holds for $k = q_r + q_s$ if and only if  $q_s m_r  > q_r m_s$.  If $q_s m_r = q_r m_s$, then \eqref{ineq1} holds with
   $>$ replaced by $=$.  
   \end{lemma}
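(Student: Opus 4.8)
The plan is to substitute $k = q_r + q_s$ directly into inequality \eqref{ineq1} and collapse both sides to a single ratio. With this choice of $k$ we have $k - q_s = q_r$ and $k - q_r = q_s$, so \eqref{ineq1} reads
$$\binom{m_s - 1}{q_s - 1} \cdot \binom{m_r}{q_r} > \binom{m_r - 1}{q_r - 1} \cdot \binom{m_s}{q_s}.$$
The key observation is that the two ``full'' binomial coefficients appearing here are each a rational multiple of the ``reduced'' coefficient sitting on the opposite side.

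First I would rewrite $\binom{m_r}{q_r}$ and $\binom{m_s}{q_s}$ using the elementary identity $\binom{n}{j} = \frac{n}{j}\binom{n-1}{j-1}$, which gives $\binom{m_r}{q_r} = \frac{m_r}{q_r}\binom{m_r-1}{q_r-1}$ and $\binom{m_s}{q_s} = \frac{m_s}{q_s}\binom{m_s-1}{q_s-1}$. After substituting these, both sides of the displayed inequality carry the common factor $\binom{m_s-1}{q_s-1}\binom{m_r-1}{q_r-1}$. This factor is strictly positive: the hypotheses $1 < q_r < m_r$ and $1 < q_s < m_s$ guarantee that $0 < q_r - 1 < m_r - 1$ and $0 < q_s - 1 < m_s - 1$, so each reduced binomial coefficient is a genuine nonzero entry.

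Dividing through by this positive common factor reduces the inequality to $\frac{m_r}{q_r} > \frac{m_s}{q_s}$, and since $q_r$ and $q_s$ are positive, cross-multiplication shows this is equivalent to $q_s m_r > q_r m_s$, which is exactly the asserted condition. Running the identical chain of equivalences with $>$ replaced by $=$ throughout establishes the last sentence: $q_s m_r = q_r m_s$ forces $\frac{m_r}{q_r} = \frac{m_s}{q_s}$, whence \eqref{ineq1} holds with equality.

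I do not expect any real obstacle here; the argument is a one-line cancellation once the identity $\binom{n}{j} = \frac{n}{j}\binom{n-1}{j-1}$ is in hand. The only point deserving attention is to verify that the common factor is nonzero before cancelling, which is precisely the role played by the hypotheses $1 < q_r < m_r$ and $1 < q_s < m_s$.
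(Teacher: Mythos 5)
Your proof is correct and is essentially the paper's own argument: the paper divides \eqref{ineq1} through by $\binom{m_r-1}{q_r-1}\binom{m_s-1}{q_s-1}$ and observes that the resulting ratio $f(p,u,0)=\binom{p}{u}/\binom{p-1}{u-1}$ equals $p/u$, which is exactly your cancellation via $\binom{n}{j}=\frac{n}{j}\binom{n-1}{j-1}$. The only difference is notational (the paper routes the computation through its function $f$), so there is nothing further to add.
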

   
   \begin{proof}    Inequality \eqref{ineq1} holds for $k =  q_r + q_s$ iff $f(m_r,q_r,0) > f(m_s,q_s,0)$.  It is easy to check that
   $f(p,u,0) = p/u$, hence \eqref{ineq1}  holds iff $m_r/q_r > m_s/q_s$ iff $q_s m_r  > q_r m_s$.  If $q_s m_r = q_r m_s$, then
   $f(m_r,q_r,0) = f(m_s,q_s,0)$ and we have equality in \eqref{ineq1}.
   \end{proof}

  \begin{prop} \label{main}   Let  $m_r, m_s, q_r, q_s, \in \nn$ such that
 $m_s < m_r$, $1 < q_s < m_s$, and $1 < q_r < m_r$.  Let $N = \min\{m_s + q_s, m_r +  q_r \}$. Then
 
 \begin{enumerate}

 \item Inequality \eqref{ineq1} holds for $k = q_r + q_s + 1$ if and only if  $$\frac{(m_r - q_r)}{(q_r + 1)} \frac{m_r}{q_r}  > \frac{(m_s - q_s)}{(q_s + 1)} \frac{m_s}{q_s}.$$
 
  \item  Suppose $d \in \nn$ with  $0  \leq d \leq N$.   If $g(m_r,q_r,d) > g(m_s,q_s,d)$, then $g(m_r,q_r,i) > g(m_s,q_s,i)$ for 
  all $i$ such that $d \leq i \leq N$. 
  
    \item  Suppose $d \in \nn$ with  $0  \leq d \leq N$.   If $f(m_r,q_r,d) > f(m_s,q_s,d)$ and $g(m_r,q_r,d) > g(m_s,q_s,d)$,
 then $f(m_r,q_r,i) > f(m_s,q_s,i )$ for 
  all $i$ such that $d \leq i \leq N$. 
 
\item     If $f(m_r,q_r,0) > f(m_s,q_s,0)$ and $g(m_r,q_r,0) > g(m_s,q_s,0)$, then inequality \eqref{ineq1} holds for all $k$ such that $q_r + q_s  \leq k \leq  N$.
It follows that  inequality \eqref{ineq1} holds for all $k$ such that $q_r + q_s  \leq k \leq  N$ if the following
 two inequalities hold:
 $$q_s m_r  > q_r m_s,  \quad (m_r-q_r)(q_s + 1) > (m_s -q_s)(q_r + 1).$$

 \end{enumerate}
 
 \end{prop}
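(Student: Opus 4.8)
The plan is to handle the four parts in order, with part (b) as the engine that drives the rest. Throughout I use the recursion $f(p,u,i+1) = g(p,u,i)\,f(p,u,i)$ of Lemma \ref{fg}, the base values $f(p,u,0) = p/u$ and $g(p,u,0) = (p-u)/(u+1)$, and the equivalence recorded just before Lemma \ref{fg}, namely that inequality \eqref{ineq1} at coalition size $k$ is the same as $f(m_r,q_r,k-q_r-q_s) > f(m_s,q_s,k-q_r-q_s)$. I also note that each $f(p,u,i)$ is a ratio of positive binomial coefficients, hence positive, and that $g(p,u,i) = (p-u-i)/(u+i+1)>0$ throughout the range under consideration; this positivity is exactly what licenses multiplying inequalities in the inductive step.

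For part (a) I would set $k = q_r+q_s+1$, so the relevant index is $i=1$. By Lemma \ref{fg}, $f(p,u,1) = g(p,u,0)\,f(p,u,0) = \tfrac{p-u}{u+1}\cdot\tfrac{p}{u}$, and \eqref{ineq1} at this $k$ is precisely $f(m_r,q_r,1) > f(m_s,q_s,1)$; substituting the two parameter sets reproduces the stated inequality verbatim. No induction is needed.

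Part (b) is the key step and the only place requiring a real idea. Since $q_r+1+i$ and $q_s+1+i$ are positive, the inequality $g(m_r,q_r,i) > g(m_s,q_s,i)$ is equivalent, after clearing denominators, to $(m_r-q_r-i)(q_s+1+i) > (m_s-q_s-i)(q_r+1+i)$. Expanding, the $-i^2$ terms on both sides cancel, and a short computation collapses the difference to the \emph{affine} function of $i$
$$\bigl[(m_r-q_r)(q_s+1)-(m_s-q_s)(q_r+1)\bigr] + (m_r-m_s)\,i > 0.$$
The crucial observation is that the coefficient of $i$ simplifies to exactly $m_r-m_s>0$, so this function is strictly increasing in $i$; hence if it is positive at $i=d$ it stays positive for every $i\ge d$ up to $N$. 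This monotone-slope phenomenon is the heart of the whole proposition, and verifying that the coefficient of $i$ is precisely $m_r-m_s$ is the one genuinely nontrivial calculation — everything else is bookkeeping. (Pleasingly, the constant term here is exactly the quantity appearing in the second inequality of part (d).)

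Parts (c) and (d) are then formal. For part (c) I induct on $i$ from the base case $i=d$: assuming $f(m_r,q_r,i) > f(m_s,q_s,i)$, I have $g(m_r,q_r,i) > g(m_s,q_s,i)$ by part (b), and since all four quantities are positive I may multiply the two inequalities to obtain, via Lemma \ref{fg}, $f(m_r,q_r,i+1) = g(m_r,q_r,i)f(m_r,q_r,i) > g(m_s,q_s,i)f(m_s,q_s,i) = f(m_s,q_s,i+1)$, completing the step. For part (d) I apply part (c) with $d=0$: using $f(p,u,0)=p/u$, the hypothesis $f(m_r,q_r,0)>f(m_s,q_s,0)$ is exactly $q_s m_r > q_r m_s$ (as in Lemma \ref{mincase}), and clearing denominators in $g(\cdot,\cdot,0)$ shows $g(m_r,q_r,0)>g(m_s,q_s,0)$ is exactly $(m_r-q_r)(q_s+1) > (m_s-q_s)(q_r+1)$. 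Part (c) then gives $f(m_r,q_r,i) > f(m_s,q_s,i)$ throughout the range, which translates back, through the equivalence above with $i = k-q_r-q_s$, into \eqref{ineq1} for all $k$ with $q_r+q_s \le k \le N$.
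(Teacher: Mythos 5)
Your proposal is correct and follows essentially the same route as the paper: part (a) via $f(p,u,1)=g(p,u,0)f(p,u,0)$, part (b) by clearing denominators in $g(m_r,q_r,i)>g(m_s,q_s,i)$ and exploiting that the resulting difference grows by exactly $m_r-m_s>0$ as $i$ increases (the paper phrases this as an induction adding $m_r$ versus $m_s$ at each step, you phrase it as positivity of the slope of an affine function of $i$ --- the same observation), and parts (c) and (d) by the multiplicative recursion of Lemma \ref{fg} and specialization to $d=0$. The only cosmetic difference is that you make the positivity needed to multiply the inequalities explicit, which the paper leaves implicit.
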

 
 \begin{proof}   As noted above, inequality \eqref{ineq1} holds for $k$ iff $f(m_r,q_r, k - q_r - q_s) > f(m_s,q_s, k - q_r - q_s)$.  \vsp

(a) Inequality \eqref{ineq1} holds for $k = q_r + q_s + 1$ iff $f(m_r,q_r,1) > f(m_s,q_s,1)$.  By Lemma \ref{fg}, $f(p,u,1) = g(p,u,0) \cdot f(p,u,0)$, hence we need 
$$g(m_r,q_r,0) \cdot f(m_r,q_r,0) > g(m_s,q_s,0) \cdot f(m_s,q_s,0), $$ which yields the claimed inequality.  \vsp

(b)  The proof is by induction on $i$.  By definition,  $g(m_r,q_r,i) > g(m_s,q_s,i)$ is $$\frac{m_r - q_r - i}{q_r + i + 1} > \frac{m_s - q_s - i}{q_s + i + 1}$$ which is equivalent to  $m_r(q_s + i) + m_r - q_r > m_s(q_r + i) + m_s - q_s$.  
 By assumption this holds for $i =d$.  Assume it is true for some $i$ with $d \leq i < N$.
 Then
$$m_r(q_s + i + 1) + m_r - q_r  = m_r(q_s + i) + ( m_r - q_r ) + m_r  > $$  $$m_s(q_r + i) + m_s - q_s + m_s = 
m_s(q_r + i + 1) + m_s  - q_s,$$ since 
$m_r > m_s$.
It follows that $g(m_r,q_r,i+1) > g(m_s,q_s,i+1)$ and we are done by induction.  \vsp

(c)     Since $f(p,u,i+1) = g(p,u,i) \cdot f(p,u,i)$ by Lemma \ref{fg}, 
the claimed result follows easily from (b) by induction.    \vsp

 (d) This follows from (c) with $d = 0$, noting that $g(m_r,q_r,0) > g(m_s,q_s,0)$ iff the second inequality holds.

 \end{proof}

  \begin{corollary} \label{cor2}   Let  $m_r, m_s, q_r, q_s, \in \nn$ such that
 $m_s < m_r$, $1 < q_s < m_s$, $1 < q_r < m_r$ and let $N = \min\{ m_s-q_s, m_r - q_r\}$.   Suppose we have
  $a$ such that $q_r + q_s + 1   \leq a \leq N$ and  \eqref{ineq1} holds with $k = a$.
 Then 
 \eqref{ineq1} holds for all $k $ such that $a \leq k \leq N$.
 \end{corollary}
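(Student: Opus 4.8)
The plan is to reduce the whole statement to the monotone behaviour of a single ratio and then locate its one turning point. Writing $i = k - q_r - q_s$, recall from the discussion preceding Lemma~\ref{fg} that \eqref{ineq1} holds for $k$ if and only if $f(m_r,q_r,i) > f(m_s,q_s,i)$. I therefore set $R(i) := f(m_r,q_r,i)/f(m_s,q_s,i)$ and aim to show that once $R(i) > 1$ the ratio can never return to a value $\le 1$ as $i$ increases through the admissible range (up to $N$). The one-step behaviour of $R$ is governed by $g$: since $f(p,u,i+1) = g(p,u,i)\,f(p,u,i)$ by Lemma~\ref{fg}, and all the $f$-values are positive, we get $R(i+1)/R(i) = g(m_r,q_r,i)/g(m_s,q_s,i)$. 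Thus $R$ strictly increases from $i$ to $i+1$ exactly when $g(m_r,q_r,i) > g(m_s,q_s,i)$, and is non-increasing otherwise.

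Next I would invoke Proposition~\ref{main}(b), which says precisely that the set of indices $i$ with $g(m_r,q_r,i) > g(m_s,q_s,i)$ is upward closed. Hence there is a single threshold $d_0$ so that $R$ is non-increasing on $\{0,\dots,d_0\}$ and strictly increasing afterwards; in other words $R$ is ``U-shaped''. If the hypothesis index $d := a - q_r - q_s$ (note $d \ge 1$ since $a \ge q_r + q_s + 1$) satisfies $d \ge d_0$, then $R$ is increasing on $[d,N]$, so $R(i) \ge R(d) > 1$ there and \eqref{ineq1} holds for every admissible $k$ with $a \le k \le N$, which is the conclusion.

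It remains to exclude the case $d < d_0$, and this is the crux of the argument. If $d < d_0$, then $0$ and $d$ both lie in the non-increasing range, so $R(0) \ge R(d) > 1$. I would then prove the arithmetic fact that $R(0) > 1$ already forces $g(m_r,q_r,0) > g(m_s,q_s,0)$. Unwinding definitions, $R(0) > 1$ is $q_s m_r > q_r m_s$ (exactly as in Lemma~\ref{mincase}), while $g(m_r,q_r,0) > g(m_s,q_s,0)$ is $(m_r - q_r)(q_s+1) > (m_s - q_s)(q_r+1)$, i.e.\ the second inequality in Proposition~\ref{main}(d). A short computation shows this difference equals $(q_s m_r - q_r m_s) + (m_r - m_s) - (q_r - q_s)$; when $q_r \le q_s$ it is visibly positive, and when $q_r > q_s$ one writes $\Delta := q_r - q_s$, uses $q_s m_r > q_r m_s$ to bound $\Delta < q_s(m_r - m_s)/m_s$, and combines this with $q_s < m_s$ to obtain $(q_s+1)(m_r - m_s) > \Delta(m_s+1)$, which is exactly positivity. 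Hence $g(m_r,q_r,0) > g(m_s,q_s,0)$, so $d_0 = 0$, contradicting $d < d_0$ (as $d \ge 1$). Therefore $d \ge d_0$ and the conclusion follows.

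The main obstacle is exactly this bridge from $f$-dominance to $g$-dominance at index $0$, equivalently the statement that $R$ cannot dip back below $1$ after exceeding it; everything else is bookkeeping with Lemma~\ref{fg} and the upward-closedness in Proposition~\ref{main}(b). An alternative packaging of the same content is to prove the implication $f(m_r,q_r,d) > f(m_s,q_s,d) \Rightarrow g(m_r,q_r,d) > g(m_s,q_s,d)$ (which follows from the U-shape argument above) and then apply Proposition~\ref{main}(c) directly at the index $d$; I expect either route to cost the same short computation.
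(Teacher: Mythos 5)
Your proof is correct, and the arithmetic in your ``crux'' step checks out: with $\Delta = q_r - q_s$, the difference $(m_r-q_r)(q_s+1) - (m_s-q_s)(q_r+1)$ does equal $(q_s+1)(m_r-m_s) - \Delta(m_s+1)$ once you substitute $q_s m_r - q_r m_s = q_s(m_r-m_s) - \Delta m_s$, and your bounds $\Delta m_s < q_s(m_r-m_s)$ and $\Delta < m_r - m_s$ make it positive. The skeleton you use --- the ratio $R$ driven by $g$ via Lemma~\ref{fg}, upward-closedness of $g$-dominance from Proposition~\ref{main}(b), and propagation by \ref{main}(c) --- is the same as the paper's, but the key step differs. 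The paper takes $a$ minimal subject to \eqref{ineq1} holding, so that $f$-dominance fails at $i-1$ and holds at $i$; feeding these two facts into $f(\cdot,\cdot,i) = g(\cdot,\cdot,i-1)f(\cdot,\cdot,i-1)$ immediately forces $g(m_r,q_r,i-1) > g(m_s,q_s,i-1)$, with no computation at all, and then (b) and (c) finish. You instead rule out the case ``$d$ below the $g$-threshold'' by pushing back to $i = 0$ and proving the arithmetic implication $q_s m_r > q_r m_s \Rightarrow (m_r-q_r)(q_s+1) > (m_s-q_s)(q_r+1)$ under the sole hypothesis $m_s < m_r$. This costs you a computation the paper avoids, but it buys something the paper does not have: it shows that the first inequality in Proposition~\ref{main}(d) implies the second whenever $m_s < m_r$, which would let one dispense with Lemma~\ref{lem1.5} in the proof of Proposition~\ref{cor1} and also quietly covers the edge case where \eqref{ineq1} happens to hold already at $k = q_r+q_s$ (where the paper's ``$i-1$'' index would not exist). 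Both routes are legitimate; the paper's is shorter, yours is more self-contained in what it extracts from the hypotheses.
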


 \begin{proof}  It is enough to assume that $a$ is minimal such that \eqref{ineq1} holds for $k = a$. For ease of
 exposition, let $i = a - q_r - q_s$.   Then, by minimality of $a$,
  we have $f(m_r,q_r, i - 1) \leq f(m_s,q_s, i - 1)$  
 and $f(m_r,q_r, i ) > f(m_s,q_s,i )$.  By Lemma \ref{fg}, $ g(m_r,q_r, i - 1) \cdot 
 f(m_r,q_r, i - 1) > 
 g(m_s,q_s, i - 1) \cdot f(m_s,q_s,i - 1)$.  The two inequalities imply $g(m_r,q_r,i - 1) > 
 g(m_s,q_s,i - 1)$.
 Then, by Proposition \ref{main}(b), $g(m_r,q_r,i) > g(m_s,q_s,i)$. The result now
 follows from Proposition \ref{main} (c).  
 \end{proof}

 \bibliographystyle{amsplain}
\providecommand{\bysame}{\leavevmode\hbox to3em{\hrulefill}\thinspace}
\providecommand{\MR}{\relax\ifhmode\unskip\space\fi MR }
\providecommand{\MRhref}[2]{%
  \href{http://www.ams.org/mathscinet-getitem?mr=#1}{#2}
}
\providecommand{\href}[2]{#2}

\end{document}